\title[Asymptotic behavior of dimension]{On the asymptotic behavior of the dimension of spaces of harmonic functions with polynomial growth}
\author{Xian-Tao Huang}
\address{School of Mathematics\\  Sun Yat-sen University\\ Guangzhou 510275\\ E-mail address: hxiant@mail2.sysu.edu.cn}
\newtheorem{thm}{Theorem}[section]
\newtheorem{prop}[thm]{Proposition}
\newtheorem{lem}[thm]{Lemma}
\newtheorem{cor}[thm]{Corollary}
\theoremstyle{definition}
\theoremstyle{remark}
\newtheorem{defn}[thm]{Definition}
\newtheorem{rem}[thm]{Remark}
\numberwithin{equation}{section}
\begin{document}

\maketitle
\begin{abstract} Suppose $(M^{n},g)$ is a Riemannian manifold with nonnegative Ricci curvature, and let $h_{d}(M)$ be the dimension of the space of harmonic functions with polynomial growth of growth order at most $d$.
Colding and Minicozzi proved that $h_{d}(M)$ is finite.
Later on, there are many researches which give better estimates of $h_{d}(M)$.
We study the behavior of $h_{d}(M)$ when $d$ is large in this paper.
More precisely, suppose that $(M^{n},g)$ has maximal volume growth and has a unique tangent cone at infinity,  then when $d$ is sufficiently large, we obtain some estimates of $h_{d}(M)$ in terms of the growth order $d$, the dimension $n$ and the the asymptotic volume ratio $\alpha=\lim_{R\rightarrow\infty}\frac{\mathrm{Vol}(B_{p}(R))}{R^{n}}$.
When $\alpha=\omega_{n}$, i.e., $(M^{n},g)$ is isometric to the Euclidean space, the asymptotic behavior obtained in this paper recovers a well-known asymptotic property of $h_{d}(\mathbb{R}^{n})$.

\vspace*{5pt}
\noindent {\it 2010 Mathematics Subject Classification}: 35A01, 53C23, 58J05.

\vspace*{5pt}
\noindent{\it Keywords}: Ricci curvature, harmonic function with polynomial growth, eigenvalue.

\end{abstract}
\section{Introduction}  

Suppose $(M^{n},g)$ is a noncompact complete $n$-dimensional manifold. (Throughout this paper, we always assume $n\geq2$.)
We fix a point $p\in M$, and denote by $\rho(x)=d(x,p)$.
For a $d>0$, we consider the linear space:
$$\mathcal{H}_{d}(M)=\{u\in C^{\infty}(M)\mid \Delta u=0, u(p)=0, |u(x)|\leq K(\rho(x)^{d}+1) \text{ for some }K\}$$
and denote by $h_{d}(M)= \textmd{dim}\mathcal{H}_{d}(M)$.

There are many researches on polynomial growth harmonic functions on manifolds with nonnegative Ricci curvature.

Yau \cite{Y75} proved that any positive harmonic function on complete manifolds with nonnegative Ricci curvature is constant.
In \cite{Cg82}, Cheng further proved that on such manifolds any harmonic function of sublinear growth must be constant.

On the other hand, it is well-known that
$h_{d}(\mathbb{R}^{n})\thicksim\frac{2}{(n-1)!}d^{n-1}$
as $d\rightarrow\infty$.

In view of these results, Yau conjectured that on a manifold $(M^{n},g)$ with non-negative Ricci curvature it always holds $h_{d}(M)<\infty$.

Li and Tam solved the case for $d=1$ and the case $n=2$ of this conjecture (see \cite{LT89} \cite{LT91}).
Yau's conjecture was completely solved by Colding and Minicozzi in \cite{CM97a}.
Later on, there are many researches which give better estimates of $h_{d}(M)$.

For example, a more precise upper bound for the dimension was obtained:

\begin{thm}[see \cite{CM98b}, \cite{L97}]\label{upperbound}
If $(M^{n},g)$ has nonnegative Ricci curvature, then there exists a constant $C=C(n)>0$ such that $h_{d}(M)\leq Cd^{n-1}$ for all $d\geq1$.
\end{thm}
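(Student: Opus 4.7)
Following Colding-Minicozzi and Li, I would establish $\dim K\leqslant C(n)d^{n-1}$ for every finite-dimensional subspace $K\subset\mathcal{H}_{d}(M)$; this implies $h_{d}(M)\leqslant C(n)d^{n-1}$. Write $k=\dim K$. The two analytic tools I draw from $\mathrm{Ric}\geqslant 0$ are the Bishop-Gromov volume comparison $V(B_{p}(\beta R))\leqslant\beta^{n}V(B_{p}(R))$ for $\beta\geqslant 1$ and the Moser mean value inequality
\begin{equation*}
u^{2}(x)\leqslant\frac{C(n)}{V(B_{x}(r))}\int_{B_{x}(r)}u^{2}\,dV
\end{equation*}
for every harmonic $u$. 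For $R$ large the bilinear form $\langle u,v\rangle_{R}=\int_{B_{p}(R)}uv\,dV$ is positive definite on $K$ by unique continuation; pick an $L^{2}(B_{p}(R))$-orthonormal basis $\{u_{i}\}_{i=1}^{k}$ and form the reproducing kernel $F_{R}(x)=\sum_{i}u_{i}^{2}(x)$, so that $\int_{B_{p}(R)}F_{R}=k$ and $F_{R}(x)=\sup\{u(x)^{2}\mid u\in K,\,\|u\|_{R}=1\}$.

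The core of the argument is a reverse-Poincar\'e-type trace inequality adapted to polynomial growth: for every $\beta>1$ there exists $R$, chosen by a pigeonhole over a geometric sequence of scales, such that
\begin{equation*}
\sum_{i=1}^{k}\int_{B_{p}(\beta R)}u_{i}^{2}\,dV\leqslant C(n)\beta^{2d+n}k.
\end{equation*}
Morally this records that no $u\in K$ grows faster than $\rho^{d}$, while the factor $\beta^{n}$ reflects volume doubling; it is crucial that the exponent is $2d$ and not a larger power of $d$. Combining this with the mean value inequality applied to unit-$L^{2}(B_{p}(R))$-norm combinations of the $u_{i}$ converts the trace bound into the pointwise estimate $F_{R}(x)\leqslant C(n)\beta^{2d+n}k/V(B_{x}(r))$ whenever $B_{x}(r)\subset B_{p}(\beta R)$. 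Choosing $\beta=1+1/d$ gives $\beta^{2d}\leqslant e^{2}$, and a careful codimension-one covering of $B_{p}(R)$ adapted to the volume doubling, of cardinality $\lesssim d^{n-1}$, then produces $k\leqslant C(n)d^{n-1}$ after summing.

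The main obstacle is extracting the sharp exponent $n-1$. A naive volumetric covering of $B_{p}(R)$ at scale $R/d$ has cardinality $\sim d^{n}$ and only gives the cruder bound $k\leqslant C(n)d^{n}$ already present in \cite{CM97a}. The improvement requires both a codimension-one (spherical-shell) discretization and the pigeonhole selection of a scale $R$ at which the reverse trace inequality holds with the correct constant; these two devices together sharpen the bound by an extra factor of $d$. A secondary technical difficulty is the absence of exact homogeneity on a general manifold, which forces the reverse trace inequality to be proved by scale-pigeonholing rather than by direct scaling.
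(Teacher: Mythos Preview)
This theorem is not proved in the paper; it is quoted from \cite{CM98b} and \cite{L97} as background, so there is no in-paper argument to compare against. Your plan does follow the Colding--Minicozzi/Li framework: a finite-dimensional $K\subset\mathcal{H}_{d}(M)$, the Bergman-type kernel $F_{R}=\sum u_{i}^{2}$ with its extremal characterisation, Li--Schoen mean value, Bishop--Gromov, the choice $\beta=1+1/d$, and a scale-pigeonhole. At that level it is correct.

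There is, however, a genuine gap in how you organise the estimates. With $\{u_{i}\}$ orthonormal on $B_{p}(R)$, the pigeonhole on $\det E_{r}$ bounds the \emph{product} of the eigenvalues of $A_{R}^{-1}A_{\beta R}$, not their sum; a single eigenvalue can be as large as $\beta^{(2d+n)k}$, so your trace bound $\sum_{i}\|u_{i}\|_{\beta R}^{2}\leqslant C\beta^{2d+n}k$ does not follow. Worse, even granting it, the resulting pointwise bound $F_{R}(x)\leqslant C\beta^{2d+n}k/V(B_{x}(r))$ carries a factor of $k$, and integrating yields only $k\leqslant Ck\cdot(\text{cover size})$, which is vacuous. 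The cure in \cite{L97} is to reverse the normalisation: take $\{u_{i}\}$ orthonormal on the \emph{larger} ball $B_{p}(\beta R)$. Then the Bergman/mean-value step gives $F_{\beta R}(x)\leqslant C/V(B_{x}(r))$ with \emph{no} $k$, while determinant-pigeonhole plus AM--GM gives the lower bound $\sum_{i}\int_{B_{p}(R)}u_{i}^{2}\geqslant k\beta^{-(2d+n)}\geqslant c(n)k$.

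Finally, a literal ``codimension-one covering of $B_{p}(R)$ by $\lesssim d^{n-1}$ balls of radius $R/d$'' does not exist; any such cover needs $\sim d^{n}$ balls. The sharp exponent in \cite{L97} comes instead from applying mean value with the \emph{variable} radius $r(x)=\beta R-\rho(x)$, so that $F_{\beta R}(x)\leqslant C/V(B_{x}(\beta R-\rho(x)))$, and then checking via Bishop--Gromov and an annular decomposition that $\int_{B_{p}(R)}V(B_{x}(\beta R-\rho(x)))^{-1}\,dV\leqslant C(n)\,d^{\,n-1}$; in \cite{CM98b} the same gain is obtained through a Neumann--Poincar\'e/Caccioppoli argument. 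Your intuition that the saving is ``boundary-like'' is right, but the mechanism is analytic rather than a combinatorial cover, and this step needs to be made explicit.
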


Note that the power $n-1$ in Theorem \ref{upperbound} is sharp compared to the Euclidean case.

The example given by Donnelly (see \cite{Don01}) shows that we can not expect $h_{d}(M^{n})\leq h_{d}(\mathbb{R}^{n})$ holds for every $d$ and every $(M^{n},g)$ with nonnegative Ricci curvature.

On the other hand, in Colding and Minicozzi's paper \cite{CM98b}, they proved that if $(M^{n},g)$ has nonnegative Ricci curvature then
$$h_{d}(M)\leq C_{1} \alpha d^{n-1}+C_{2}f(d^{n-1}),$$
where $\alpha$ is the  asymptotic volume ratio; $C_{1}$ and $C_{2}$ are positive constants depending only on $n$; the function $f:\mathbb{R}^{+}\rightarrow\mathbb{R}^{+}$ also depends only on $n$ and satisfies
$f(t)\leq t$ and $\lim_{t\rightarrow\infty}\frac{f(t)}{t}=0$.
See Theorem 0.26 in \cite{CM98b}.

In the case that $(M^{n},g)$ has nonnegative sectional curvature, Li and Wang proved that $\lim_{d\rightarrow\infty} d^{-n}\sum_{i=1}^{d}h_{i-1}(M)\leq\frac{2\alpha}{n!\omega_{n}}$ and $\liminf_{d\rightarrow\infty}d^{1-n}h_{d}(M)\leq\frac{2\alpha}{(n-1)!\omega_{n}}$, where $\alpha$ is the  asymptotic volume ratio, $\omega_{n}$ is the volume of a unit ball in the $n$-dimensional Euclidean space.
See Theorem 2.2 and Corollary 2.3 of \cite{LW99}.
Note that in the case that $\alpha=\omega_{n}$, i.e. $(M^{n},g)$ is isometric to the Euclidean space, the above two inequalities are in fact equalities.

From the theorems in \cite{CM98b} and \cite{LW99}, it seems that the behavior of $h_{d}(M)$ becomes better when $d$ is large, and the better bound of $h_{d}(M)$ has some relations to the asymptotic volume ratio $\alpha$.

In this paper, we also study asymptotic properties of $h_{d}(M)$ when $M$ has nonnegative Ricci curvature.
The following theorem is our main result.

\begin{thm}\label{main-4}
Suppose $(M^{n}, g)$ is a complete manifold with nonnegative Ricci curvature and maximal volume growth, i.e. there is a constant $\alpha\in(0,\omega_{n}]$ such that $$\alpha=\lim_{r\rightarrow\infty}\frac{\mathrm{Vol}(B_{p}(r))}{r^{n}}.$$
Assume that $M$ has a unique tangent cone at infinity.
Denote by $h_{d} = \textmd{dim}\mathcal{H}_{d}(M)$.
Then we have
\begin{align}\label{1.22221}
\lim_{d\rightarrow\infty} d^{-n}\sum_{i=1}^{d}h_{i-1}=\frac{2\alpha}{n!\omega_{n}},
\end{align}
and
\begin{align}\label{1.22222}
\liminf_{d\rightarrow\infty}d^{1-n}h_{d}=\frac{2\alpha}{(n-1)!\omega_{n}}.
\end{align}
\end{thm}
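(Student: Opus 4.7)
The plan is to use the uniqueness of the tangent cone at infinity to translate the counting of polynomial growth harmonic functions on $M$ into an eigenvalue-counting problem on the cross section of the cone, and then to apply the Weyl asymptotic.

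Let $C(X)$ be the (unique) tangent cone at infinity of $M$. The cone metric $dr^2+r^2g_X$ gives $\mathrm{Vol}(B_r(o))=\frac{r^n}{n}\mathrm{Vol}(X)$, so comparing with the asymptotic volume ratio yields $\mathrm{Vol}(X)=n\alpha$. On the cone, separation of variables shows that every polynomial growth harmonic function is a finite sum $\sum_j r^{\nu_j}\phi_j(\theta)$ with $\Delta_X\phi_j=-\lambda_j\phi_j$ and the indicial equation $\nu_j(\nu_j+n-2)=\lambda_j$. Consequently the space of polynomial growth harmonic functions on $C(X)$ of growth order at most $d$ has dimension $N_X(d(d+n-2))$, where $N_X(\lambda):=\#\{\lambda_j\leq\lambda\}$ is the eigenvalue counting function of $\Delta_X$.

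The heart of the argument is to transfer this count to $M$. For the upper bound I would combine Theorem 0.26 of \cite{CM98b} with an Almgren-type frequency monotonicity on $M$: each $u\in\mathcal{H}_d(M)$ has a well-defined asymptotic frequency $\le d$, and blow-downs $u(r_i\cdot)/\sup_{B_{r_i}}|u|$ subconverge to a homogeneous harmonic function on $C(X)$ of degree at most $d$; uniqueness of the tangent cone then gives an essentially injective map $\mathcal{H}_d(M)\hookrightarrow\mathcal{H}_d(C(X))$ modulo lower order terms, yielding $h_d(M)\le N_X(d(d+n-2))+o(d^{n-1})$. For the matching lower bound I would lift each cone harmonic to $M$ by solving Dirichlet problems on large balls $B_{r_i}(p)$ with boundary data taken from the cone harmonic under the approximating Gromov--Hausdorff maps, and extract limits as $r_i\to\infty$; this is the direction where uniqueness of the tangent cone is essential, since it rules out oscillation in the limit and guarantees that linearly independent cone harmonics lift to linearly independent elements of $\mathcal{H}_d(M)$, giving $h_d(M)\ge N_X(d(d+n-2))-o(d^{n-1})$. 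This is in the spirit of the work of Y.\ Ding and of Cheeger--Colding--Minicozzi.

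Finally, apply Weyl's law on the compact cross section $X$, viewed as a Ricci limit space of Hausdorff dimension $n-1$ (so carrying an $\mathrm{RCD}(n-2,n-1)$ structure on which Weyl's law is known):
\[
N_X(\lambda)\sim\frac{\omega_{n-1}}{(2\pi)^{n-1}}\mathrm{Vol}(X)\,\lambda^{(n-1)/2}.
\]
Inserting $\mathrm{Vol}(X)=n\alpha$ and $\lambda=d(d+n-2)\sim d^2$, together with the duplication-formula identity $n!\,\omega_n\omega_{n-1}=2^n\pi^{n-1}$, produces $h_d\sim\frac{2\alpha}{(n-1)!\omega_n}d^{n-1}$, which gives (\ref{1.22222}). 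Summing and using $\sum_{i=1}^{d}(i-1)^{n-1}\sim d^n/n$ then yields (\ref{1.22221}).

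The principal obstacle I expect is the lower bound in the transfer step: producing enough genuine harmonic functions on $M$ to match the full Weyl count on $X$. The upper bound via frequency monotonicity is the standard direction; the lower bound requires a quantitative lift of cone harmonics to $M$ with control of the error under blow-down, which is delicate precisely because the harmonic modes on $C(X)$ are not smooth near the singular set of $X$ and because the convergence of harmonic functions under Gromov--Hausdorff limits must preserve both the dimension count and the frequency. A secondary technical point is the Weyl asymptotic on the possibly singular space $X$, but this is by now standard in the $\mathrm{RCD}$ framework.
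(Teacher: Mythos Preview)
Your identification of the difficult direction is backwards. The lower bound $h_{d}\geq N_{X}(d(d+n-2)-\epsilon)-1$ is exactly Ding's theorem (Theorem~\ref{main-2} in the paper), and together with Weyl's law on $X$ it gives $\liminf_{d\to\infty}d^{1-n}h_{d}\geq\frac{2\alpha}{(n-1)!\omega_{n}}$ with no further work. The genuine gap in your proposal is the \emph{upper} bound, which you dismiss as ``the standard direction.'' Almgren-type frequency monotonicity is not known on general manifolds with nonnegative Ricci curvature, and even granting three-circles type control (as in \cite{X15}), the blow-down map $u\mapsto u_{\infty}$ is neither linear nor obviously injective on dimensions: two independent elements of $\mathcal{H}_{d}(M)$ can share the same leading homogeneous part on $C(X)$ and differ only in lower-order behavior, so your sketch does not produce the inequality $h_{d}\leq N_{X}(d(d+n-2))+o(d^{n-1})$. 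Indeed, if it did, you would obtain $\lim_{d\to\infty}d^{1-n}h_{d}=\frac{2\alpha}{(n-1)!\omega_{n}}$, which is strictly stronger than the $\liminf$ in (\ref{1.22222}); the paper proves only the $\liminf$ precisely because no pointwise upper bound on $h_{d}$ is available.

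The paper's route to the upper bound is entirely different and considerably more delicate. Rather than bounding $h_{d}$ individually, it bounds the Ces\`aro sum directly: one chooses blow-down scales $R_{i}$ at which the determinant $\mathrm{det}_{1}E_{r}$ on $\mathcal{H}_{d}(M)$ grows slowly (Lemma~\ref{4.11112}), extracts a subspace of dimension $(1-\tfrac{1}{A})h_{d}$ on which the energy is controlled across the annulus, and passes these functions to the limit to obtain a $k$-dimensional space of harmonic functions on a ball in $C(X)$. The key analytic input is then not Weyl's law applied to a single eigenvalue, but the inequality $\sum_{i=1}^{k}\lambda_{i}^{1/2}\leq\frac{r}{2}\frac{d}{ds}\big|_{s=r}\ln\mathrm{det}_{1}\tilde{E}_{s}$ of Proposition~\ref{prop3.6}, which after unwinding yields an upper bound on $\sum_{i=1}^{d}(2i+n-2)k_{i}$ in terms of $\sum\lambda_{i}^{1/2}$. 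Only after this does Weyl's law enter, to estimate $\sum\lambda_{i}^{1/2}$ from below, and an elementary optimization in $h_{d}$ produces (\ref{5.22225}). Equation (\ref{1.22222}) is then deduced from (\ref{1.22221}) by contradiction. You should replace your frequency/injection sketch with this Li--Wang determinant argument.
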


As a corollary, we have
\begin{cor}\label{cor-1}
Let $(M^{n}, g)$ be a complete manifold with nonnegative Ricci curvature.
Suppose that $(M^{n}, g)$ has maximal volume growth and has a unique tangent cone at infinity, then the following three conditions are equivalent:
\begin{enumerate}
  \item $\lim_{d\rightarrow\infty} d^{-n}\sum_{i=1}^{d}h_{i-1}=\frac{2}{n!}$;
  \item $\lim_{d\rightarrow\infty}d^{1-n}h_{d}=\frac{2}{(n-1)!}$;
  \item $(M^{n}, g)$ is isometric to the Euclidean space.
\end{enumerate}
\end{cor}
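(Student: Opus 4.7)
The plan is to reduce the corollary directly to Theorem \ref{main-4} together with the rigidity part of the Bishop--Gromov volume comparison theorem. The key observation is that the quantities $\frac{2}{n!}$ and $\frac{2}{(n-1)!}$ appearing in (1) and (2) are exactly the values that the right-hand sides of (\ref{1.22221}) and (\ref{1.22222}) take when $\alpha=\omega_{n}$.

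First I would show $(3)\Rightarrow(1)$ and $(3)\Rightarrow(2)$. When $(M^{n},g)$ is isometric to the Euclidean space, the classical asymptotic $h_{d}(\mathbb{R}^{n})\sim\frac{2}{(n-1)!}d^{n-1}$ recalled in the introduction immediately yields (2); applying a standard Cesàro averaging to this asymptotic (or alternatively just invoking (\ref{1.22221}) with $\alpha=\omega_{n}$) gives (1). Next, for $(1)\Rightarrow(3)$, I plug the limit in (1) into (\ref{1.22221}) to obtain $\frac{2\alpha}{n!\omega_{n}}=\frac{2}{n!}$, hence $\alpha=\omega_{n}$; then the equality case of Bishop--Gromov volume comparison forces $(M^{n},g)$ to be isometric to $\mathbb{R}^{n}$. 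For $(2)\Rightarrow(3)$, if the limit $\lim_{d\to\infty}d^{1-n}h_{d}$ exists, then it must coincide with the liminf, which by (\ref{1.22222}) equals $\frac{2\alpha}{(n-1)!\omega_{n}}$. Comparing with the value $\frac{2}{(n-1)!}$ postulated in (2) gives $\alpha=\omega_{n}$, and the conclusion follows as before.

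There is essentially no serious obstacle: all the nontrivial content is packaged into Theorem \ref{main-4}, and what remains is the elementary arithmetic linking $\alpha=\omega_{n}$ to the specific limits in (1) and (2), followed by the (classical) equality-case rigidity of volume comparison. The only point requiring minor care is (2)$\Rightarrow$(3), where one must use that the \emph{existence} of the limit reduces the statement to one about the liminf, so that Theorem \ref{main-4} is applicable.
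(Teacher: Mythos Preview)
Your proposal is correct and follows essentially the same route as the paper: the paper also derives (3)$\Rightarrow$(1),(2) from the classical Euclidean asymptotics, and obtains (1)$\Rightarrow$(3) and (2)$\Rightarrow$(3) by reading off $\alpha=\omega_{n}$ from (\ref{1.22221}) and (\ref{1.22222}) and then invoking the rigidity in the volume comparison theorem. Your remark that in (2)$\Rightarrow$(3) the existence of the limit reduces matters to the $\liminf$ is exactly the implicit step the paper uses.
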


Theorem \ref{main-4} improves the results in Theorem 2.2 and Corollary 2.3 of \cite{LW99} in two aspects.
Firstly, the assumption that the manifold has nonnegative sectional curvature has been weakened.
Secondly, we obtain equalities in (\ref{1.22221}) and (\ref{1.22222}), while in \cite{LW99}, the conclusions are some inequalities.

The proof in \cite{LW99} considers the level sets of the distance function $d(p,\cdot)$.
The nonnegative sectional curvature assumption and the smooth structure are used to control some quantities such as the eigenvalues of the level sets and the restriction of harmonic functions on the level sets.
In addition, Weyl's law on smooth manifolds is used in \cite{LW99}.

In the present paper, we combine Li and Wang's proof (\cite{LW99}), Cheeger-Colding's theory (see e.g. \cite{CC96}, \cite{CC97}) and the recent development in \textmd{RCD} theory to prove Theorem \ref{main-4}.

If $(M^{n},g)$ is a complete manifold with nonnegative Ricci curvature and maximal volume growth, from Cheeger and Colding's theory, every tangent cone at infinity of $M^{n}$ is a metric cone.
However, in general, the tangent cones may be not unique; they depend on the sequences of scales used to blow down the manifold, see \cite{P97}, \cite{CC97} for examples.
For manifolds with nonnegative sectional curvature, the tangent cone at infinity is unique.
There are other sufficient conditions to ensure uniqueness of tangent cone for manifold with nonnegative Ricci curvature and maximal volume growth, see e.g. \cite{CM14}.

For a manifold $(M^{n},g)$ with nonnegative Ricci curvature and maximal volume growth, its asymptotically conic property makes the harmonic functions with polynomial growth on it behave like harmonic functions with polynomial growth on a cone.
This idea has impact on many works, see \cite{CM97b}, \cite{CM98b}, \cite{D04}, \cite{X15}, \cite{H15}, \cite{H16} etc.

Let's mention that, in \cite{D04}, for a manifold $(M^{n},g)$ with nonnegative Ricci curvature, maximal volume growth and unique tangent cone at infinity, it is proved that $h_{d}(M)\geq Cd^{n-1}$, where the constant $C$ depends on $n$ and the asymptotic volume ratio.
See also \cite{X15} and \cite{H16} for related researches.

Before we going on introducing the main steps of our proof, let's give a brief introduction on the study of \textmd{RCD} theory.

Using the theory of optimal transport, Lott, Villani (\cite{LV09}) and Sturm (\cite{St06I} \cite{St06II}) independently introduced the $\textmd{CD}(K,N)$-condition, which is a notion of `Ricci bounded from below by $K\in \mathbb{R}$ and dimension bounded above by $N\in[1,\infty]$' for general metric measure spaces.
Later on, Bacher and Sturm (\cite{BS10}) introduced reduced curvature-dimension condition $\textmd{CD}^{\ast}(K,N)$.
For $N\in[1,\infty)$, both the $\textmd{CD}(K,N)$ condition and $\textmd{CD}^{\ast}(K,N)$ condition are compatible with the Riemannian case.
These two conditions are both stable under the measured Gromov-Hausdorff convergence.
In particular, measured Gromov-Hausdorff of manifolds with a uniform lower Ricci curvature bound (`Ricci-limit spaces' for short) are both $\textmd{CD}(K,N)$ spaces and $\textmd{CD}^{\ast}(K,N)$ spaces.
However, both the class of $\textmd{CD}(K,N)$ spaces and the class of $\textmd{CD}^{\ast}(K,N)$ spaces are too large in the sense that they both include Finsler geometries.

Ambrosio, Gigli and Savar\'{e} (\cite{AGS14}) introduced the notion of $\textmd{RCD}(K,\infty)$ spaces (see also \cite{AGMR15} for the simplified axiomatization), which rules out Finsler geometries, while retaining the stability properties under measured Gromov-Hausdorff convergence.
Later on, many researchers began to considered the $\textmd{RCD}^{*}(K,N)$ spaces (for the case of $N<\infty$), see \cite{AMS15} \cite{EKS15} \cite{Gig13} etc.
To be precise, a $\textmd{RCD}^{*}(K,N)$ space $(X,d,m)$ is a $\textmd{CD}^{*}(K,N)$ space such that the Sobolev space $W^{1,2}(X)$ is a Hilbert space.

In the recent years there are too many important developments on the $\textmd{RCD}^{*}$ theory so that we cannot list them here.
Comparing to the properties hold only on manifolds or Ricci-limit spaces, the results obtained on general $\textmd{RCD}^{*}(K,N)$ spaces are `intrinsic' in the sense that their proofs do not depend on the smooth structure.
In addition, many recent results on $\textmd{RCD}^{*}(K,N)$ spaces supplement the knowledge on manifolds or Ricci-limit spaces.

On the other hand, there are also many results in Cheeger-Colding's theory which seem hard to be generalized to general $\textmd{RCD}^{*}(K,N)$ spaces.
For example, it is unknown whether the `almost volume cone implies almost metric cone' properties (see \cite{CC96}) and the structure theory for Gromov-Hausdorff limit of non-collapsed manifolds with a uniform lower Ricci curvature bound (see e.g. \cite{CC97}) hold on general $\textmd{RCD}^{*}(K,N)$ spaces.

Now we continuous to introduce the main ideas in the proof of Theorem \ref{main-4}.

Suppose $(M^{n},g)$ is a manifold with nonnegative Ricci curvature and maximal volume growth, from the theory in \cite{CC96} \cite{CC97}, if $r_{i}\rightarrow\infty$, then up to a subsequence, $(M, p, g_{i}=r_{i}^{-2}g)$  convergence in the measured Gromov-Hausdorff sense to some metric measure space $(M_{\infty}, p_{\infty}, \rho_{\infty},\nu_{\infty})$ such that $(M_{\infty}, p_{\infty}, \rho_{\infty})$ is a metric cone $(C(X),p_{\infty},d_{C(X)})$, and $\nu_{\infty}$ is a multiple of the Hausdorff measure $\mathcal{H}^{n}$.

It is unclear whether the induced metric measure structure on $X$, denoted by $(X,d_{X},m_{X})$, is a Ricci-limit space.
However, from \cite{Ke15}, we know $(X,d_{X},m_{X})$ is a $\textmd{RCD}^{*}(n-2,n-1)$ space.
This enable us to apply the $\textmd{RCD}$ theory to $(X,d_{X},m_{X})$.
More precisely, in a recent work \cite{AHT17}, Ambrosio, Honda and Tewodrose find a sufficient and necessary condition for Weyl's law for eigenvalues of Neumann problem on a $\textmd{RCD}^{*}(K,N)$ space.
The proof of Weyl's law in \cite{AHT17} is mainly based on the structure theory of $\textmd{RCD}^{*}(K,N)$ spaces (see \cite{MN14} \cite{KM16} \cite{DePMR16} \cite{GP16}) as well as the study of short-time behavior of the heat kernel on $\textmd{RCD}^{*}(K,N)$ spaces.
By the structure theory of Cheeger and Colding on $C(X)$, we know $(X,d_{X},m_{X})$ satisfies the criterion in \cite{AHT17}, hence Weyl's law holds on $(X,d_{X},m_{X})$.
See Section \ref{s2} for details.

On the other hand, on the conic space $(C(X),d_{C(X)},\nu_{\infty})$, any harmonic function has a very beautiful form, and the eigenfunctions for Neumann problem on $(X,d_{X},m_{X})$ are closely related to the harmonic functions on $C(X)$.
Furthermore, the sum of roots of eigenvalues for Neumann problem on $(X,d_{X},m_{X})$ has an interesting property, see Proposition \ref{prop3.6}.
These enable us to make use of Weyl's law on $(X,d_{X},m_{X})$ to obtain information on harmonic function on $C(X)$.
See Sections \ref{s3} and \ref{s5} for details.

Thus, in order to finish the proof of Theorem \ref{main-4}, the remaining problem is to choose suitable scales to blow down the manifold so that at the same time some information on $\mathcal{H}_{d}(M)$ can pass to the limit.
This is done in Section \ref{s4}.
We remark that in this step, we use some results on convergence of functions (especially harmonic functions) defined on domain of different spaces.
When these spaces are manifolds with a uniform Ricci curvature lower bound, some results are obtained in \cite{D02}, \cite{Xu14}, \cite{H11} etc.
The more recent researches in \cite{GMS15}, \cite{ZZ17}, \cite{AH17} etc. show that similar results holds when these spaces are general $\mathrm{RCD}^{*}(K,N)$ spaces.

Combining the arguments from Sections \ref{s2} to \ref{s5} we can finish the proof of Theorem \ref{main-4}.

Finally, we remark that the assumption on the uniqueness of tangent cone at infinity is because of technical reasons.
See Remark \ref{rem5.2} for discussions.

\vspace*{20pt}

\noindent\textbf{Acknowledgments.} 
The author would like to thank Prof. B.-L. Chen, H.-C. Zhang and X.-P. Zhu and Dr. Y. Jiang for helpful discussions.
The author is partially supported by NSFC 11521101.

\section{Properties of tangent cones at infinity}\label{s2}

Following Definition 5.1 in \cite{Ke15}, we use the terminology of $(0, N)$-cones throughout this paper :
\begin{defn}[$(0, N)$-cones]
Suppose $(X, d_{X}, m_{X})$ is a metric measure space with $\textmd{diam}(X)\leq\pi$, the $(0, N)$-cone over $(X, d_{X}, m_{X})$ is a metric measure space $(C(X),d_{C(X)},m_{C(X)})$ such that:
\begin{itemize}
  \item $C(X)=X\times[0,\infty)/(X\times\{0\})$;
  \item $(C(X),d_{C(X)})$ is a metric cone with
  $$d_{C(X)}((x_{1},t_{1}),(x_{2},t_{2}))=\sqrt{t_{1}^{2}+t_{2}^{2}-2t_{1}t_{2}\cos(d_{X}(x_{1},x_{2}))}$$
  for $(x_{1},t_{1}),(x_{2},t_{2})\in C(X)$;
  \item $m_{C(X)}=t^{N}dt\otimes m_{X}$.
\end{itemize}
\end{defn}

Obviously, the map $\Phi:X\times(0,\infty)\rightarrow C(X)$ is locally bi-Lipschitz.
More precisely, given two positive numbers $s<r$, $\Phi|_{X\times(r-s,r+s)}$ is a $(1\pm\Psi(s;r))$-bi-Lipschitz map onto its image, here $X\times(r-s,r+s)$ is equipped with the product metric, and $\Psi(s;r)$ means a nonnegative function such that when $r$ is fixed, we have $\lim_{s\rightarrow0}\Psi(s;r)=0.$

For the definition and a quick introduction of properties of $\mathrm{RCD}^{*}(K,N)$ spaces, there are many references, see e.g. Section 2 in \cite{MN14}.

Let $(X_{i},x_{i},d_{i},m_{i})$, $i\in\mathbb{N}$, be a sequence of pointed metric measure spaces, we say $(X_{i},x_{i},d_{i},m_{i})$ converges to $(X_{\infty},x_{\infty},d_{\infty},m_{\infty})$ in the pointed measured Gromov-Hausdorff (pmGH for short) sense if there are sequences $R_{i}\uparrow+\infty$, $\epsilon_{i}\downarrow0$ and Borel maps $\varphi_{i}:X_{i}\rightarrow X_{\infty}$ such that
\begin{enumerate}
  \item $\varphi_{i}(x_{i})=x_{\infty}$;
  \item $|d_{i}(x, y)-d_{\infty}(\varphi_{i}(x),\varphi_{i}(y))| <\epsilon_{i}$ for any $i$ and all $x, y \in B_{x_{i}}(R_{i})$;
  \item the $\epsilon_{i}$-neighbourhood of $\varphi_{i}(B_{x_{i}}(R_{i}))$ contains $B_{x_{\infty}}(R_{i}-\epsilon_{i})$;
  \item for any $f\in C_{b}(X_{\infty})$ with bounded support it holds $\lim_{n\rightarrow\infty}\int f\circ\varphi_{i}dm_{i}=\int fdm_{\infty}$.
\end{enumerate}

When all the $(X_{i},x_{i},d_{i},m_{i})$ satisfy a uniformly doubling condition, the pmGH convergence is equivalent to the pointed measured Gromov convergence introduced in \cite{GMS15}.
See \cite{GMS15} for a detailed study of these notions.

Suppose $(M^{n},g)$ is a complete Riemannian manifold with nonnegative Ricci curvature.
Let $\rho$ be the distance determined by $g$, $\mu$ be the volume element determined by $g$.
We can define $(M_{i}, p, \rho_{i},\nu_{i})$, where $M_{i}$ is the same differential manifold as $M^{n}$, $p$ is a fixed point on $M_{i}=M^{n}$, $\rho_{i}$ is the distance determined by the rescaled metric $g_{i}=r_{i}^{-2}g$, $r_{i}\rightarrow\infty$,  and $\nu_{i}$ is the renormalized measure defined by
$$\nu_{i}(A):=\frac{1}{\mu_{i}(B^{(i)}_{p}(1))}\mu_{i}(A),$$
where $A\subset M_{i}$, and $\mu_{i}$ is the volume element determined by $g_{i}$.
Then by Gromov's compactness theorem and Theorem 1.6 in \cite{CC97}, up to a subsequence, $(M_{i}, p, \rho_{i},\nu_{i})$ converges to some $(M_{\infty}, p_{\infty}, \rho_{\infty},\nu_{\infty})$ in the pmGH sense.
$(M_{\infty},p_{\infty},\rho_{\infty})$ is called a tangent cone at infinity of $M$.
In general, the tangent cones at infinity may not unique.

Suppose in addition $(M^{n},g)$ has maximal volume growth:
\begin{align}\label{1.01111}
\alpha:=\lim_{r\rightarrow\infty}\frac{\mu(B_{p}(r))}{r^{n}}>0.
\end{align}
From the theory in \cite{CC96} \cite{CC97} etc., in this case $(M_{\infty}, p_{\infty}, \rho_{\infty},\nu_{\infty})$ has many strong properties.

Firstly, by Theorem 7.6 in \cite{CC96}, $(M_{\infty}, p_{\infty}, \rho_{\infty})$ is a metric cone, i.e. $(M_{\infty}, p_{\infty}, \rho_{\infty})=(C(X),p_{\infty},d_{C(X)})$, where $(X,d_{X})$ is a compact metric space with $\textmd{diam}(X)\leq\pi$.

Secondly, by Theorem 5.9 in \cite{CC97}, $dim_{\mathcal{H}} M_{\infty} = n$ (in this paper $dim_{\mathcal{H}}$ means the Hausdorff dimension), and for any $R > 0$ and $M_{i}\ni q_{i}\xrightarrow{d_{GH}} z\in M_{\infty}$, we have
    \begin{align}\label{1.11111}
        \lim_{i\rightarrow\infty} \mu_{i}(B^{(i)}_{q_{i}}(R))=\mathcal{H}^{n}(B_{z}(R)).
    \end{align}

By (\ref{1.01111}) and (\ref{1.11111}) we have
\begin{align}\label{1.11112}
\mathcal{H}^{n}(B_{p_{\infty}}(1))=\lim_{i\rightarrow\infty} \mu_{i}(B^{(i)}_{p}(1))=\lim_{r_{i}\rightarrow\infty}\frac{\mu(B_{p}(r_{i}))}{r_{i}^{n}}=\alpha,
\end{align}
and the limit renormalized measure satisfies $\nu_{\infty}=\frac{1}{\alpha}\mathcal{H}^{n}$.

Note that $dim_{\mathcal{H}} X=n-1$.
Furthermore, $\mathcal{H}^{n}$ satisfies a co-area formula on $C(X)$, that is, for any $\Omega\subset\subset C(X)$,
\begin{align}\label{conic3}
\mathcal{H}^{n}(\Omega)=\int_{0}^{\infty}s^{n-1}ds\int_{X}\chi(\Omega_{s})d\mathcal{H}^{n-1},
\end{align}
where $\Omega_{s}=\{x\in X\mid z=(x,s)\in\Omega\}$.
See Proposition 7.6 in \cite{H15} for a proof.
By (\ref{1.11112}) and (\ref{conic3}), it is easy to see
\begin{align}\label{1.11115}
\mathcal{H}^{n-1}(X)=n\alpha.
\end{align}

Define the measure $m_{X}:=\frac{1}{\alpha}\mathcal{H}^{n-1}$ on $X$, then it is easy to see
$(M_{\infty}, p_{\infty}, \rho_{\infty},\nu_{\infty})$ is a $(0, n-1)$-cone over $(X, d_{X}, m_{X})$.

In the next, we use the $\textmd{RCD}$ theory to study $(M_{\infty}, p_{\infty}, \rho_{\infty},\nu_{\infty})=(C(X),d_{C(X)},m_{C(X)})$ and $(X, d_{X}, m_{X})$.

Obviously, $(M_{\infty}, p_{\infty}, \rho_{\infty},\nu_{\infty})$ is an $\textmd{RCD}^{*}(0, n)$ space.
Thus by Corollary 1.3 of \cite{Ke15}, we further know that the cross section $(X, d_{X}, m_{X})$ is an $\textmd{RCD}^{*}(n-2,n-1)$ space.
In particular, we know $(X, d_{X}, m_{X})$ satisfies a volume doubling property and Poincar\'{e} inequality (in fact, to derive these two properties on $(X, d_{X}, m_{X})$, we can also use the arguments in \cite{D02} to avoid the use of the $\textmd{RCD}$ property).

Let $\Delta_{X}$ be the Laplacian operator on $(X,d_{X},m_{X})$.
Let $0=\lambda_{0}<\lambda_{1}\leq\lambda_{2}\leq\ldots$  be the eigenvalues of $\Delta_{X}$, $\{\varphi_{i}(x)\}_{i=0}^{\infty}$ be the corresponding eigenfunctions, i.e.
\begin{align}
-\Delta_{X}\varphi_{i}(x)=\lambda_{i}\varphi_{i}(x).
\end{align}
We require $\int_{X}|\varphi_{i}|^{2}dm_{X}=1$, and $\int_{X}\varphi_{i}\varphi_{j}dm_{X}=0$ for every $i\neq j$.
Note that $\lambda_{i}\rightarrow\infty$ and $\{\varphi_{i}(x)\}_{i=0}^{\infty}$ spans $L^{2}(X,m_{X})$, which can be derived from a standard argument basing on a Rellich-type Compactness Theorem.
Every $\varphi_{i}$ always has a Lipschitz representative in the corresponding Sobolev class (see \cite{J14}).
In the remaining part of this paper, the $\varphi_{i}$'s are always required to be Lipschitz.

Recently, Weyl's law on $\textmd{RCD}^{*}(K,N)$ spaces has been studied.
In \cite{AHT17}, Ambrosio, Honda and Tewodrose gives a sufficient and necessary condition for the Weyl's law for eigenvalues of Neumann problem on compact $\textmd{RCD}^{*}(K,N)$ spaces, see Theorem 4.3 in \cite{AHT17}.
(See \cite{ZZ17} for Weyl's law for eigenvalues of Dirichlet problem on $\textmd{RCD}^{*}(K,N)$ spaces.)
Before we give a statement of Weyl's law, we first introduce some notion and background knowledge.

\begin{defn}\label{def_reg_set}
The $k$-dimensional regular set $\mathcal{R}_{k}$ of an $\textmd{RCD}^{*}(K,N)$ space $(X,d,m)$ is the set of points $x\in\mathrm{supp}m$ such that
$$(X,r^{-1}d,\frac{1}{m(B_{x}(r))}m,x)\xrightarrow{pmGH} (\mathbb{R}^{k},d_{\mathbb{R}^{k}},\frac{1}{\omega_{k}}\mathcal{H}^{k},0_{k})$$
as $r\rightarrow0^{+}$.
\end{defn}

We remark that the definition of the regular set in Definition \ref{def_reg_set} is a bit different from that of  Mondino and Naber (see \cite{MN14}).
However, the two definitions are equivalent, see Remark 3.5 in \cite{AHT17}.
Thus, by \cite{MN14}, the following structure theorem holds.
\begin{thm}[see \cite{MN14}]\label{structure}
Let $(X,d,m)$ be an $\mathrm{RCD}^{*}(K,N)$ space with $K\in\mathbb{R}$ and $N\in (1,\infty)$. Then
$$m(X\setminus\bigcup_{k=1}^{[N]}\mathcal{R}_{k})= 0.$$
Moreover, for any sufficiently small $\epsilon$, we can cover each $\mathcal{R}_{k}$, up to an $m$-negligible subset, by a countable collection of sets $\{U_{\epsilon}^{k,l}\}_{l}$ such that each $U_{\epsilon}^{k,l}$ is $(1+\epsilon)$-biLipschitz to a subset of $\mathbb{R}^{k}$.
\end{thm}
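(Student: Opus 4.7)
The plan is to prove the theorem in two main stages: first establish that the regular sets $\mathcal{R}_k$ exhaust $X$ up to an $m$-negligible set, and second construct the biLipschitz charts on each $\mathcal{R}_k$. Throughout, I would use the fact that tangent cones at $m$-a.e. point of an $\mathrm{RCD}^*(K,N)$ space exist, are unique along some subsequence by the Gromov compactness combined with Bishop-Gromov, and are themselves pointed $\mathrm{RCD}^*(0,N)$ spaces by the scaling behaviour of the reduced curvature-dimension condition and the preservation of the Sobolev-Hilbert axiom under pmGH limits.

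For the exhaustion stage, the key input is Gigli's splitting theorem: an $\mathrm{RCD}^*(0,N)$ space containing a line splits isometrically as a metric measure product with $\mathbb{R}$. I would define, for $m$-a.e. $x$, the splitting index $k(x)$ to be the maximal $k$ such that some tangent cone at $x$ splits off a Euclidean factor $\mathbb{R}^k$. Iterating Gigli's theorem, the remaining factor after $k(x)$ splittings is compact and contains no line. A maximality argument, combined with the fact that Euclidean $\mathbb{R}^k$ itself appears as a tangent whenever the compact factor degenerates in a suitable Gromov-Hausdorff sense, shows that at $m$-a.e. point one can arrange the tangent to be exactly $\mathbb{R}^{k(x)}$; measurability of $k(\cdot)$ and a decomposition of $X$ according to its value then give $m(X \setminus \bigcup_{k=1}^{[N]}\mathcal{R}_k)=0$.

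For the biLipschitz charts, at a point $x \in \mathcal{R}_k$ I would produce $\delta$-splitting maps $u=(u_1,\dots,u_k): B_x(r) \to \mathbb{R}^k$: harmonic functions whose gradients are $L^2$-close to an orthonormal system and whose Hessians have small $L^2$ norm. Their existence follows from approximating the coordinate functions on the tangent $\mathbb{R}^k$ (where they are literally harmonic with orthonormal gradients) and transferring back to $B_x(r)$, using the Mosco-type stability of Cheeger energies and the Bochner inequality $\frac{1}{2}\Delta|\nabla u_i|^2 \ge |\mathrm{Hess}\,u_i|^2$ available in $\mathrm{RCD}^*$ theory. The upper Lipschitz bound for $u$ is a direct consequence of the gradient estimate $|\nabla u_i| \le 1+\epsilon$ on a set of definite density. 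The lower bound, which is the delicate half, I would derive by a maximal-function argument: on a set $U^{k,l}_\epsilon$ of large measure, an $L^2$-excess decay for the Hessian controls the deviation of $u$ from an affine isometry at every scale, yielding $|u(y)-u(z)| \ge (1-\epsilon) d(y,z)$. A standard Vitali covering of $\mathcal{R}_k$ by such good sets produces the countable family $\{U^{k,l}_\epsilon\}_l$.

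The main obstacle I expect is the lower Lipschitz bound, because one cannot directly appeal to the smooth Cheeger-Colding almost-splitting theorem; one must instead combine Bochner, Gigli splitting, and the volume non-collapsing encoded in $\mathcal{H}^k(B_x(r))/r^k \to \omega_k$ at regular points. A secondary difficulty is the identification of the correct value of $k(x)$ and its measurability, for which the synthetic Bishop-Gromov inequality, together with a tangent-of-tangent argument showing the maximal Euclidean factor is in fact attained, plays the role that the topological dimension argument plays in the smooth setting. Once these two ingredients are in place, stitching them together by countable exhaustion produces both the full-measure decomposition and the biLipschitz charts claimed in the theorem.
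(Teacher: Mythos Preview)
The paper does not prove this theorem at all: it is quoted verbatim as a known result from Mondino--Naber \cite{MN14}, introduced by the sentence ``Thus, by \cite{MN14}, the following structure theorem holds,'' and no argument is given. So there is nothing in the paper to compare your proposal against.

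Your sketch is broadly aligned with the actual strategy of \cite{MN14}: Gigli's splitting theorem iterated on tangent cones, $\delta$-splitting maps built by pulling back coordinate functions from the tangent $\mathbb{R}^k$, and a maximal-function/excess-decay argument for the lower biLipschitz bound. One point where your outline compresses matters is the exhaustion step: you separate it cleanly from the chart construction, but in \cite{MN14} the two are intertwined---the $\delta$-splitting maps are used not only to build charts but also, via a telescoping over scales, to force the tangent cone at $m$-a.e.\ point of the chart domain to actually be $\mathbb{R}^k$ (in the sense of Definition~\ref{def_reg_set}, which requires \emph{every} rescaling to converge, not merely some subsequence). Your ``maximality argument'' and ``tangent-of-tangent'' remark gesture at this, but the real mechanism is that the biLipschitz chart itself certifies uniqueness of the tangent on a full-measure subset. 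That said, since the paper takes the theorem as a black box, your proposal already goes well beyond what the paper requires.
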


As in \cite{AHT17}, we have the following definitions.
\begin{defn}
Suppose $(X,d,m)$ is an $\textmd{RCD}^{*}(K,N)$ space.
$\mathrm{dim}_{d,m}(X)$ is defined to be the largest integer $k$ such that $\mathcal{R}_{k}$ has positive $m$-measure.
For any positive integer $k$, $\mathcal{R}^{*}_{k}\subset\mathcal{R}_{k}$ is defined to be
$$\mathcal{R}^{*}_{k}:=\bigl\{x\in \mathcal{R}_{k}\bigl| \exists \lim_{r\rightarrow0^{+}}\frac{m(B_{x}(r))}{\omega_{k}r^{k}}\in(0,\infty)\bigr\}.$$
\end{defn}

On a compact $\textmd{RCD}^{*}(K,N)$ space $(X,d,m)$, we consider the Neumann eigenvalue problems, and let $0=\lambda_{0}<\lambda_{1}\leq\lambda_{2}\leq\ldots$ be the eigenvalues counted with multiplicity.
Let
\begin{align}
N_{(X,d,m)}(\lambda) := \#\{i \in\mathbb{N}^{+} |\lambda_{i}\leq\lambda\}\nonumber
\end{align}
be the counting function.

The following theorem is a special case of Weyl's law obtained from \cite{AHT17}:

\begin{thm}[see Corollary 4.4 in \cite{AHT17}]\label{weyl_law}
Let $(X,d,m)$ be a compact $\mathrm{RCD}^{*}(K,N)$ space with $K\in\mathbb{R}$ and $N\in (1,\infty)$, and let $k=\mathrm{dim}_{d,m}X$.
Suppose $k=N$, then we have
\begin{align}\label{1.11116}
\lim_{\lambda\rightarrow\infty}\frac{N_{(X,d,m)}(\lambda)}{\lambda^{\frac{k}{2}}} =\frac{\omega_{k}}{(2\pi)^{k}}\mathcal{H}^{k}(\mathcal{R}^{*}_{k})<\infty.
\end{align}
\end{thm}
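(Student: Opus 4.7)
The plan is to reduce Weyl's law to a short-time heat-trace asymptotic via a Tauberian argument, and then analyze the trace using the structure theory of $\mathrm{RCD}^*(K,N)$ spaces. Compactness of $X$ and a Rellich-type theorem give discrete spectrum, so the heat semigroup $P_t=e^{t\Delta_X}$ is trace-class with symmetric heat kernel $p_t$, and
\begin{equation*}
\mathrm{Tr}(P_t)=\sum_{i=0}^{\infty}e^{-\lambda_i t}=\int_X p_t(x,x)\,dm(x).
\end{equation*}
Using $\omega_k\Gamma(\tfrac{k}{2}+1)=\pi^{k/2}$, Karamata's Tauberian theorem converts (\ref{1.11116}) into the equivalent statement
\begin{equation*}
\lim_{t\to 0^+} t^{k/2}\,\mathrm{Tr}(P_t)=\frac{\mathcal{H}^k(\mathcal{R}^*_k)}{(4\pi)^{k/2}},
\end{equation*}
so the target becomes this short-time asymptotic.

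The central ingredient is the on-diagonal short-time asymptotic at regular points,
\begin{equation*}
\lim_{t\to 0^+}(4\pi t)^{k/2}\,\theta_k(x)\,p_t(x,x)=1,\qquad x\in\mathcal{R}^*_k,
\end{equation*}
where $\theta_k(x):=\lim_{r\to 0^+}m(B_x(r))/(\omega_k r^k)$. The strategy is a rescaling argument: by the very definition of $\mathcal{R}^*_k$, combined with $k=N$, the rescaled pointed spaces $(X,r^{-1}d,(\omega_k r^k\theta_k(x))^{-1}m,x)$ converge in the pmGH sense to the Euclidean model $(\mathbb{R}^k,d_{\mathbb{R}^k},\omega_k^{-1}\mathcal{H}^k,0_k)$ as $r\to 0^+$. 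Specializing $r=\sqrt{t}$ and invoking the stability of heat kernels along pmGH-converging sequences of $\mathrm{RCD}^*(K,N)$ spaces (a consequence of Mosco convergence of the Cheeger energies together with uniform Gaussian estimates) then transports the Euclidean diagonal value $(4\pi)^{-k/2}$ back to $x\in X$.

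To integrate this pointwise limit I would use dominated convergence. The Li--Yau-type Gaussian upper bound on $\mathrm{RCD}^*(K,N)$ spaces gives $p_t(x,x)\le C/m(B_x(\sqrt{t}))$ for $t\in(0,1]$, and Bishop--Gromov yields the uniform lower bound $m(B_x(\sqrt{t}))\ge c\,t^{N/2}$ on the compact space $X$; because $k=N$, these combine into a uniform dominating estimate $t^{k/2}p_t(x,x)\le C'$. Theorem \ref{structure} together with $k=\mathrm{dim}_{d,m}X$ forces $m(X\setminus\mathcal{R}_k)=0$, and the rectifiability results of De Philippis--Marchese--Rindler and Gigli--Pasqualetto upgrade this to $m(X\setminus\mathcal{R}^*_k)=0$ while identifying $\theta_k$ as the Radon--Nikodym derivative $dm/d\mathcal{H}^k|_{\mathcal{R}^*_k}$. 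Dominated convergence then delivers
\begin{equation*}
\lim_{t\to 0^+}t^{k/2}\,\mathrm{Tr}(P_t)=\int_{\mathcal{R}^*_k}\frac{dm(x)}{(4\pi)^{k/2}\theta_k(x)}=\frac{\mathcal{H}^k(\mathcal{R}^*_k)}{(4\pi)^{k/2}}.
\end{equation*}

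The main obstacle is the pointwise heat-kernel asymptotic at regular points: one must propagate the pmGH limit up to the heat kernels themselves with enough uniformity to conclude at the diagonal, and for this the hypothesis $k=N$ is indispensable, since it makes the rescaled sequence non-collapsed and aligns the Gaussian scaling with the rate $t^{k/2}$. A second, subtler point is to rule out $m$-mass on the lower regular strata $\mathcal{R}_j$ for $j<k$, again leaning on $k=N$. Once these two points are in hand, the heat-trace-plus-Karamata framework produces (\ref{1.11116}) directly.
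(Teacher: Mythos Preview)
The paper does not prove this theorem; it is quoted verbatim as Corollary~4.4 of \cite{AHT17} and used as a black box (the only argument the paper supplies is in Proposition~\ref{prop2.1111}, where it verifies that the cross section $X$ of the tangent cone satisfies the hypothesis $k=N=n-1$). Your sketch is therefore not being compared against anything in the present paper, but it does accurately reproduce the strategy of \cite{AHT17}: reduce Weyl's law to a short-time trace asymptotic by Karamata, establish the pointwise diagonal asymptotic at regular points via pmGH stability of heat kernels, and integrate using the Li--Yau Gaussian upper bound together with the Bishop--Gromov lower volume bound (which match in exponent precisely because $k=N$).

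One correction to your outline. You write that Theorem~\ref{structure} together with $k=\mathrm{dim}_{d,m}X$ forces $m(X\setminus\mathcal{R}_k)=0$, and then separately flag the lower strata as a ``subtler point''. In fact Theorem~\ref{structure} only gives $m\bigl(X\setminus\bigcup_{j\le[N]}\mathcal{R}_j\bigr)=0$, and the definition of $\mathrm{dim}_{d,m}X$ kills only the strata with $j>k$; it says nothing about $j<k$. But this does not matter for the proof: at a point $x\in\mathcal{R}^*_j$ with $j<k$ the same rescaling argument gives $p_t(x,x)\sim (4\pi t)^{-j/2}\theta_j(x)^{-1}$, so $t^{k/2}p_t(x,x)\to 0$. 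Since your dominating bound $t^{k/2}p_t(x,x)\le C'$ is already uniform over all of $X$, dominated convergence goes through and the lower strata simply contribute zero to the limit. So the ``obstacle'' you identify is not one, and you do not need constancy of the dimension (which in any case was only established later). What you do need, and correctly invoke, is $m(\mathcal{R}_k\setminus\mathcal{R}^*_k)=0$ and the identification of $\theta_k$ with $dm/d\mathcal{H}^k$ on $\mathcal{R}^*_k$; both are supplied by \cite{AHT17} and the references you cite.
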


In the following, we will apply Theorem \ref{weyl_law} to the $\textmd{RCD}^{*}(n-2,n-1)$ space $(X,d_{X},m_{X})$, where $X$ is the cross section of a tangent cone at infinity of $(M,g)$ as in the beginning of this section.
In view of Weyl's law in the form of (\ref{1.11116}), the most important point is to understand $k=\mathrm{dim}_{d_{X},m_{X}}X$ and $\mathcal{R}^{*}_{k}$.

We will apply the structure theory of Cheeger and Colding on the Ricci-limit space $(M_{\infty}, p_{\infty}, \rho_{\infty},\nu_{\infty})=(C(X),d_{C(X)},m_{C(X)})$ to get information on the structure of $(X,d_{X},m_{X})$.

Recall that in Definition 0.1 of \cite{CC97}, a point $y\in C(X)$ is called $k$-regular if every tangent cone at $y$ is isometric to $\mathbb{R}^{k}$.
Let $\tilde{\mathcal{R}}_{k}$ be the set of $k$-regular points on $C(X)$.
Here we note that even though in the definition of \cite{CC97}, at a $k$-regular points, the limit measure on $\mathbb{R}^{k}$ is not stated, but from the arguments in Proposition 1.35 of \cite{CC97}, we can easily prove that the limit measure on the tangent cone $\mathbb{R}^{k}$ is just a multiple of the standard Hausdorff measure.
Thus when restricted to the Ricci-limit spaces, the $k$-regular points in the sense of \cite{MN14} (see Definition \ref{def_reg_set}) coincides with the ones in \cite{CC97}.

\begin{prop}\label{prop2.1111}
Suppose $(C(X),d_{C(X)},m_{C(X)})$ is a tangent cone at infinity of a manifold $(M^{n},g)$ with nonnegative Ricci curvature satisfying (\ref{1.01111}).
Then for the $\mathrm{RCD}^{*}(n-2,n-1)$ space $(X, d_{X}, m_{X})$, we have $\mathrm{dim}_{d_{X},m_{X}}X=n-1$ and $\mathcal{H}^{n-1}(X\setminus\mathcal{R}_{n-1}^{*})=0$.
In particular, the following Weyl's law holds:
\begin{align}\label{1.11117}
\lim_{\lambda\rightarrow\infty}\frac{N_{(X,d_{X},m_{X})}(\lambda)}{\lambda^{\frac{n-1}{2}}} =\frac{\omega_{n-1}}{(2\pi)^{n-1}}\mathcal{H}^{n-1}(X)=\frac{n\omega_{n-1}\alpha}{(2\pi)^{n-1}}.
\end{align}
\end{prop}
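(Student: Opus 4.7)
The plan is to apply Theorem \ref{weyl_law} to the $\mathrm{RCD}^{*}(n-2, n-1)$ space $(X, d_{X}, m_{X})$. Since $\mathcal{H}^{n-1}(X) = n\alpha$ is already (\ref{1.11115}), it suffices to verify $\mathrm{dim}_{d_{X}, m_{X}} X = n-1$ and $\mathcal{H}^{n-1}(X \setminus \mathcal{R}_{n-1}^{*}) = 0$. The starting point is that $(C(X), d_{C(X)}, \nu_{\infty})$ is a non-collapsed Ricci-limit, hence the Cheeger-Colding theory of \cite{CC97} applies to it directly: writing $\tilde{\mathcal{R}}_{n}$ for the set of $n$-regular points of $C(X)$, one has $\mathcal{H}^{n}(C(X) \setminus \tilde{\mathcal{R}}_{n}) = 0$, and the volume density of $\mathcal{H}^{n}$ equals $1$ at every point of $\tilde{\mathcal{R}}_{n}$. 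The whole proof transfers this information to the cross section via the local warped-product structure of the cone near each radius $r > 0$.

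Step 1 (correspondence of regular points). For every $r > 0$ and $x \in X$, I claim that $(x, r) \in \tilde{\mathcal{R}}_{n}$ iff $x \in \mathcal{R}_{n-1}$. Using the local bi-Lipschitz map $\Phi$ and rescaling at $(x, r)$, the pmGH tangent cone of $C(X)$ at $(x, r)$ is the product of the tangent cone of $X$ at $x$ with $\mathbb{R}$, endowed with the correct limit measure via Proposition 1.35 of \cite{CC97}; isometry of this product with $\mathbb{R}^{n}$ then forces the tangent cone at $x$ to be $\mathbb{R}^{n-1}$, and the converse is identical. Consequently, for every fixed $r > 0$, $\{x \in X : (x, r) \notin \tilde{\mathcal{R}}_{n}\} = X \setminus \mathcal{R}_{n-1}$. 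Feeding this into the co-area formula (\ref{conic3}) yields
\begin{align*}
0 \;=\; \mathcal{H}^{n}\bigl(C(X) \setminus \tilde{\mathcal{R}}_{n}\bigr) \;=\; \int_{0}^{\infty} s^{n-1}\, ds \cdot \mathcal{H}^{n-1}\bigl(X \setminus \mathcal{R}_{n-1}\bigr),
\end{align*}
so $\mathcal{H}^{n-1}(X \setminus \mathcal{R}_{n-1}) = 0$. Combined with $\mathrm{dim}_{d_{X}, m_{X}} X \leq n-1$ from Theorem \ref{structure}, this gives $\mathrm{dim}_{d_{X}, m_{X}} X = n-1$.

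Step 2 (density upgrade). It remains to show $\mathcal{H}^{n-1}(\mathcal{R}_{n-1} \setminus \mathcal{R}_{n-1}^{*}) = 0$. The plan is to deduce, at $\mathcal{H}^{n-1}$-a.e.\ $x \in \mathcal{R}_{n-1}$, the existence of a positive finite density $\theta(x) := \lim_{s \to 0^{+}} m_{X}(B^{X}_{x}(s))/(\omega_{n-1} s^{n-1})$ from the non-collapsed density-one statement $\lim_{s \to 0^{+}} \mathcal{H}^{n}(B^{C(X)}_{(x, 1)}(s))/(\omega_{n} s^{n}) = 1$ at the corresponding point $(x, 1) \in \tilde{\mathcal{R}}_{n}$. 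Substituting the local expansion $d_{C(X)}((x, 1), (y, t))^{2} = (t-1)^{2} + d_{X}(x, y)^{2} + O(d_{X}^{4})$ into $\mathcal{H}^{n} = \alpha\, t^{n-1}\, dt \otimes m_{X}$ reduces the asymptotic of $\mathcal{H}^{n}(B^{C(X)}_{(x, 1)}(s))$ to a one-dimensional integral against $m_{X}(B^{X}_{x}(\cdot))$; a Beta-function identity then pins down $\theta(x) = 1/\alpha$, so $x \in \mathcal{R}_{n-1}^{*}$. Combining with Step 1 gives $\mathcal{H}^{n-1}(\mathcal{R}_{n-1}^{*}) = \mathcal{H}^{n-1}(X) = n\alpha$, and Theorem \ref{weyl_law} yields (\ref{1.11117}).

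The main obstacle is Step 2: one must convert the density-one statement on $C(X)$ into a density statement on $X$ with the correct constant, which requires a sufficiently sharp quantitative form of the warped-product approximation together with the matching via a Beta-function identity. A possible shortcut is to invoke the general density results available on $\mathrm{RCD}^{*}(K, N)$ spaces (e.g.\ those in \cite{DePMR16}, \cite{GP16}, \cite{AHT17}) which assert $\mathcal{H}^{k}(\mathcal{R}_{k} \setminus \mathcal{R}_{k}^{*}) = 0$ abstractly, thereby bypassing the explicit density computation altogether.
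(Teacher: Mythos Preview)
Your proposal is correct and Step 1 is essentially identical to the paper's argument: the paper also shows that $x\notin\mathcal{R}_{n-1}$ forces $(x,1)\notin\tilde{\mathcal{R}}_{n}$ via the product structure of tangent cones, and then uses the co-area formula (\ref{conic3}) against $\mathcal{H}^{n}(C(X)\setminus\tilde{\mathcal{R}}_{n})=0$ to conclude $m_{X}(X\setminus\mathcal{R}_{n-1})=0$. (The paper only states and uses the one implication $x\notin\mathcal{R}_{n-1}\Rightarrow (x,1)\notin\tilde{\mathcal{R}}_{n}$, which is all that is needed.)

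For Step 2, the paper does not attempt your explicit density computation at all; it goes straight to your ``shortcut'' and simply invokes Theorem 4.1 of \cite{AHT17} to obtain $m_{X}(\mathcal{R}_{n-1}\setminus\mathcal{R}_{n-1}^{*})=0$, then combines with Step 1 and (\ref{1.11115}) to apply Theorem \ref{weyl_law}. Your Beta-function route would in principle yield the sharper information $\theta(x)=1/\alpha$ (equivalently, $\mathcal{H}^{n-1}$-density equal to $1$) at each regular point, but this extra precision is not needed for Weyl's law, and the warped-product expansion you sketch would require some care to make rigorous; the abstract $\mathrm{RCD}^{*}$ density result disposes of the matter in one line.
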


\begin{proof}
We use $\mathcal{R}_{k}$ to denote the set of $k$-regular points of $(X, d_{X}, m_{X})$, and use $\tilde{\mathcal{R}}_{k}$ to denote the set of $k$-regular points of $(C(X),d_{C(X)},m_{C(X)})$.

Since $(C(X),d_{C(X)},m_{C(X)})$ is the Gromov-Hausdorff limit of a sequence of non-collapsed manifolds $(M_{i}, p, \rho_{i},\nu_{i})$, from the results in \cite{CC97}, we have $m_{C(X)}(C(X)\setminus\tilde{\mathcal{R}}_{n})= 0$ (see also \cite{CN12} for a more general result which even holds in collapsed case).

Denote by $\mathcal{S}'=X\setminus\mathcal{R}_{n-1}$, then for any $x\in \mathcal{S}'$, there exists a sequence of $r_{i}\downarrow0$ such that
$$(X,r_{i}^{-1}d_{X},\frac{1}{m_{X}(B_{x}(r_{i}))}m_{X},x)\xrightarrow{pmGH}  (Y,d_{Y},\nu^{Y},y)$$
such that $(Y,d_{Y},\nu^{Y},y)$ is not isomorphic to the Euclidean space $(\mathbb{R}^{n-1},d_{\mathbb{R}^{n-1}},\frac{1}{\omega_{n-1}}\mathcal{H}^{n-1},0_{n-1})$.
Let $x':=(x,1)\in C(X)$.
It is easy to see
$$(C(X),r_{i}^{-1}d_{C(X)},\frac{1}{m_{C(X)}(B_{x'}(r_{i}))}m_{C(X)},x')\xrightarrow{pmGH}  (Y,d_{Y},\nu^{Y},y)\otimes(\mathbb{R}^{1},d_{\mathbb{R}^{1}},\bar{c}\mathcal{H}^{1},0_{1}),$$
where $\bar{c}$ is a constant to ensure the measure in the limit measure take value $1$ on the geodesic ball of radius 1 around $(y,0_{1})$.
It is easy to check  $(Y,d_{Y},\nu^{Y},y)\otimes(\mathbb{R}^{1},d_{\mathbb{R}^{1}},\bar{c}\mathcal{H}^{1},0_{1})$ is not isomorphic to $(\mathbb{R}^{n},d_{\mathbb{R}^{n}}, \frac{1}{\omega_{n}}\mathcal{H}^{n},0_{n})$.
Thus $\mathcal{S}'\times\{1\}\subset C(X)\setminus\tilde{\mathcal{R}}_{n}$, and by the cone structure,
$(\mathcal{S}'\times[0,\infty)/\mathcal{S}'\times\{0\})\subset C(X)\setminus\tilde{\mathcal{R}}_{n}$.

Suppose $m_{X}(\mathcal{S}')=\frac{1}{\alpha}\mathcal{H}^{n-1}(\mathcal{S}')>0$, then from the co-area formula (\ref{conic3}), we have
$\mathcal{H}^{n}(\mathcal{S}'\times[1,2])>0$.
This contradicts to the fact that $\mathcal{H}^{n}(C(X)\setminus\tilde{\mathcal{R}}_{n})=0$.

Thus $m_{X}(X\setminus\mathcal{R}_{n-1})=0$.
This shows $\mathrm{dim}_{d_{X},m_{X}}X=n-1$.
By Theorem 4.1 of \cite{AHT17}, it holds $m_{X}(\mathcal{R}_{n-1}\setminus \mathcal{R}^{*}_{n-1})=0$.
Now we have $m_{X}(X\setminus \mathcal{R}^{*}_{n-1})=0$, hence  $\mathcal{H}^{n-1}(X)=\mathcal{H}^{n-1}(\mathcal{R}^{*}_{n-1})$.

Finally, by (\ref{1.11116}) and (\ref{1.11115}), we obtain (\ref{1.11117}).
The proof is completed.
\end{proof}

\begin{rem}\label{rem2.7}
It is well known that (\ref{1.11117}) is equivalent to
\begin{align}\label{1.11114}
\lim_{i\rightarrow\infty}\frac{\lambda_{i}}{i^{\frac{2}{n-1}}}=\frac{(2\pi)^{2}}{(n\omega_{n-1}\alpha)^{\frac{2}{n-1}}},
\end{align}
where $\lambda_{i}$ is the $i$-th Neumann eigenvalue (counted with multiplicity) on $(X, d_{X}, m_{X})$.
\end{rem}

\section{Harmonic functions on cones}\label{s3}

We study harmonic functions on cones in this section.
Even though what we need in this paper are results on cone-liked Ricci-limit spaces, we obtain results in the general setting of $\mathrm{RCD}$ spaces here.
More precisely, in this section we consider the following setting.
Suppose $(X, d_{X}, m_{X})$ is a $\textmd{RCD}^{*}(n-2,n-1)$ space, $(C(X),d_{C(X)},m_{C(X)})$ is a $(0, n-1)$-cone over $(X, d_{X}, m_{X})$, hence a $\textmd{RCD}^{*}(0,n)$ space.
Denote by $p_{\infty}$ the vertex of $C(X)$, and by $B_{p_{\infty}}(R)$ the open geodesic balls of radius $R$ centred at $p_{\infty}$.
Let $\Delta_{X}$ be the Laplacian operator on $(X,d_{X},m_{X})$.
Let $0=\lambda_{0}<\lambda_{1}\leq\lambda_{2}\leq\ldots$  be the Neumann eigenvalues of $\Delta_{X}$, $\{\varphi_{i}(x)\}_{i=0}^{\infty}$ be the corresponding eigenfunctions, i.e.
\begin{align}\label{2.0000}
-\Delta_{X}\varphi_{i}(x)=\lambda_{i}\varphi_{i}(x).
\end{align}
Furthermore, we require that $\{\varphi_{i}(x)\}_{i=0}^{\infty}$ forms a $L^{2}(X,m_{X})$-orthonormal basis and each $\varphi_{i}$ is Lipschitz.

For calculus in general metric spaces, the readers can refer to \cite{AGS14-1} \cite{C99} \cite{Gig15} etc.

We study harmonic functions on $(C(X),d_{C(X)},m_{C(X)})$ in this section.
A function $u$ defined on an open set $U\subset C(X)$ is called harmonic if $u\in W^{1,2}_{\mathrm{loc}}(U)$ and
$$\int_{U}\langle Du, Dv\rangle=0$$
for any Lipschitz function $v$ with $\mathrm{supp}(v)\subset\subset U$.
Here $\langle D\cdot, D\cdot\rangle:W^{1,2}_{\mathrm{loc}}(U)\times W^{1,2}_{\mathrm{loc}}(U)\rightarrow L^{1}_{\mathrm{loc}}(U)$ means the symmetric bilinear map defined by $\langle Df, Dg\rangle:=\lim_{\epsilon\downarrow0}\frac{|D(f+\epsilon g)|^{2}-|Df|^{2}}{2\epsilon}$ for any $f,g\in W^{1,2}_{\mathrm{loc}}(U)$ (here $|Df|$ is the minimal weak upper gradient of $f$), see \cite{AGS14-1} \cite{Gig15}.

Given a function $f :C(X)\rightarrow\mathbb{R}$ and $x\in X $ we denote by $f^{(x)}:\mathbb{R}^{+}\rightarrow\mathbb{R}$ the function given by $f^{(x)}(r) := f(x,r)$.
Similarly, for $r\in \mathbb{R}^{+}$ we denote by $f^{(r)}:X\rightarrow\mathbb{R}$ the function given by $f^{(r)}(x):= f(x,r)$.
For any $f\in W^{1,2}(C(X))$, we have
\begin{description}
  \item[(i)] $f^{(x)}\in W^{1,2}(\mathbb{R}^{+},r^{n-1}\mathcal{L}^{1})$ holds for $m_{X}$-a.e. $x\in X$;
  \item[(ii)] $f^{(r)}\in W^{1,2}(X,m_{X})$ holds for $r^{n-1}\mathcal{L}^{1}$-a.e. $r\in \mathbb{R}^{+}$;
  \item[(iii)] $|Df|^{2}(x,r)=\frac{1}{r^{2}}|D_{X}f^{(r)}|^{2}(x)+|D_{\mathbb{R}}f^{(x)}|^{2}(r)$ for $m_{C(X)}$-a.e. $(x,r)$.
\end{description}
Similar results holds for more general warped products spaces, and the assumption that $X$ is a $\mathrm{RCD}$ space is in fact redundant, see \cite{GH15}.

In the case that $u$ is a harmonic function defined on open set $U\subset C(X)$, from \cite{J14}, $|Du|\in L^{\infty}_{\mathrm{loc}}(U)$, and thus $u$ always has a locally Lipschitz representative.
In the remaining part of this paper, when we talk about a harmonic function $u$, we always mean the locally Lipschitz representative of it.
From Theorem \ref{eigenfcn} and Proposition \ref{prop3.2} below, we know that along the $r$ direction, $|Du|$ has better regularity.

In the remaining part of this section, let's fix a $R>0$ ($R$ may be $+\infty$).
Without loss of generality, we assume $R>1$.

\begin{thm}\label{eigenfcn}
If $u$ is a harmonic function defined on $B_{p_{\infty}}(R)\subset C(X)$ with $u(p_{\infty})=0$,
then
\begin{align}\label{har_fcn}
u(x,r)=\sum_{i=1}^{\infty}c_{i}r^{\alpha_{i}}\varphi_{i}(x),
\end{align}
where the convergence in (\ref{har_fcn}) is pointwise uniformly on any compact set of $B_{p_{\infty}}(R)$ and also in $W^{1,2}_{\mathrm{loc}}(B_{p_{\infty}}(R))$ sense.
In (\ref{har_fcn}), $\lambda_{i}$ and $\varphi_{i}$ are defined in (\ref{2.0000}), and $\lambda_{i}=\alpha_{i}(n+\alpha_{i}-2)$, $\alpha_{i}>0$; $c_{i}$ are constants and satisfy
\begin{align}\label{3.33336}
\sum_{i=1}^{\infty}\alpha_{i}c_{i}^{2}r^{2\alpha_{i}}<\infty
\end{align}
for any $r\in(0,R)$.
In particular, $c_{i}r^{\alpha_{i}}\rightarrow0$ as $i\rightarrow\infty$.
\end{thm}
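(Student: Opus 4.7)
The plan is to carry out a separation-of-variables argument adapted to the weak calculus on $C(X)$. The ingredients are the slicing identities (i)--(iii), the spectral theorem for $-\Delta_X$ on $(X,d_X,m_X)$, and the local regularity of harmonic functions on $\mathrm{RCD}^{*}(0,n)$ spaces.

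First, I would expand $u$ angularly. For a.e.\ $r\in(0,R)$, property (ii) gives $u^{(r)}\in W^{1,2}(X,m_X)\subset L^{2}(X,m_X)$, so the Fourier coefficients
\[
c_i(r):=\int_X u(x,r)\varphi_i(x)\,dm_X(x),\qquad i\ge 0,
\]
are well-defined and $u(x,r)=\sum_{i\ge 0}c_i(r)\varphi_i(x)$ holds in $L^{2}(X,m_X)$ for each such $r$. Testing the weak harmonicity $\int\langle Du,Dv\rangle\,dm_{C(X)}=0$ against functions of the product form $v(x,r)=\psi(r)\varphi_i(x)$ with $\psi\in C_c^{\infty}((0,R))$, then using (iii) to split $\langle Du,Dv\rangle$ into a radial and an angular piece, Parseval on each slice, and $-\Delta_X\varphi_i=\lambda_i\varphi_i$, leads to
\[
\int_{0}^{R}r^{n-1}c_i'(r)\psi'(r)\,dr+\lambda_i\int_{0}^{R}r^{n-3}c_i(r)\psi(r)\,dr=0.
\]
This is the weak form of the Euler equation $(r^{n-1}c_i'(r))'=\lambda_i r^{n-3}c_i(r)$ on $(0,R)$.

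Second, I would solve the Euler equation and eliminate the singular branch. Its solution space is spanned by $r^{\alpha_i}$ and $r^{\beta_i}$, where $\alpha_i,\beta_i$ are the two roots of $t(t+n-2)=\lambda_i$. For $i\ge 1$ one has $\lambda_i>0$, giving $\alpha_i>0>\beta_i$ and the identity $\lambda_i=\alpha_i(n+\alpha_i-2)$ claimed in the statement. Because $u\in W^{1,2}_{\mathrm{loc}}(B_{p_\infty}(R))$, Parseval and Fubini give $\int_{0}^{R_0}r^{n-1}c_i(r)^{2}\,dr<\infty$ for every $R_0<R$; this rules out the $r^{\beta_i}$ branch, whose contribution to this integral diverges at the origin for every $i\ge 1$. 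Hence $c_i(r)=c_i r^{\alpha_i}$ for some constant $c_i$, and the vertex condition $u(p_\infty)=0$ together with $\alpha_0=0$ forces $c_0\equiv 0$.

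Third, I would read off the coefficient bound (\ref{3.33336}) and the two convergence modes from a direct energy computation. For the partial sum $u_N:=\sum_{i=1}^{N}c_i r^{\alpha_i}\varphi_i(x)$, combining (iii), Parseval on $X$, and $\int_X|D_X\varphi_i|^{2}\,dm_X=\lambda_i$ gives
\[
\int_{B_{p_\infty}(R_0)}|Du_N|^{2}\,dm_{C(X)}=\sum_{i=1}^{N}\frac{(\lambda_i+\alpha_i^{2})c_i^{2}}{n+2\alpha_i-2}R_0^{n+2\alpha_i-2}=R_0^{n-2}\sum_{i=1}^{N}\alpha_i c_i^{2}R_0^{2\alpha_i},
\]
using the algebraic identity $\lambda_i+\alpha_i^{2}=\alpha_i(n+2\alpha_i-2)$. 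The left-hand side is uniformly bounded in $N$ by the Dirichlet energy of $u$ on any slightly larger ball, so choosing $R_0>r$ yields (\ref{3.33336}) and, in particular, $c_i r^{\alpha_i}\to 0$. The same computation applied to the tails gives $W^{1,2}_{\mathrm{loc}}$-convergence of the series; pointwise uniform convergence on compact subsets of $B_{p_\infty}(R)$ then follows by applying the local Lipschitz/$L^{\infty}$ estimate for harmonic functions on $\mathrm{RCD}^{*}(0,n)$ spaces (see \cite{J14}) to the harmonic remainders $u-u_N$. The most delicate step is the rigorous derivation of the ODE, since one must legitimise the product test functions $\psi(r)\varphi_i(x)$ within the Sobolev calculus of $C(X)$ and invoke (iii) in a form that is pointwise in $r$; this is made possible by the warped-product calculus of \cite{GH15} cited in the paper.
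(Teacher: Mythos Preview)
Your argument is correct in outline and reaches the same conclusion, but it follows a genuinely different route from the paper's proof. The paper works in the Hilbert space $\bar H^{(s)}$ of harmonic functions on $\overline{B_{p_\infty}(s)}$ with the energy inner product $\tilde E_s$, shows that the $v_k=r^{\alpha_k}\varphi_k$ are $\tilde E_s$-orthogonal, and uses Bessel's inequality to bound $\sum_i\alpha_i(\tilde c_i^{(s)})^2 s^{n-2}$ by $\int_{B_{p_\infty}(s)}|Du|^2$; it then builds $\bar u=\sum_i \tilde c_i^{(s)}s^{-\alpha_i}r^{\alpha_i}\varphi_i$, checks that $\bar u$ is harmonic, and concludes $u=\bar u$ via the maximum principle after showing $u-\bar u\in W^{1,2}_0(B_{p_\infty}(s))$. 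Your approach instead derives the radial Euler ODE for each Fourier coefficient $c_i(r)$ by testing against product functions, solves it, and identifies the series with $u$ via Parseval on the slices; this is the classical separation-of-variables proof and is arguably more direct, while the paper's Bessel/maximum-principle route avoids having to discuss the singular solutions of the ODE and gives the energy bound (\ref{3.33336}) in one stroke.

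One point to tighten: you rule out the branch $r^{\beta_i}$ using only $\int_0^{R_0} r^{n-1}c_i(r)^2\,dr<\infty$. This integral is $\int_0^{R_0} r^{n-1+2\beta_i}\,dr$, which converges precisely when $\beta_i>-n/2$; since $\beta_i=2-n-\alpha_i$, that happens whenever $\alpha_i<2-\tfrac{n}{2}$, so for $n=2,3$ and small $\lambda_i$ the $L^2$ test alone does not exclude the singular solution. You should instead use the radial part of the $W^{1,2}$ bound you already have, namely $\int_0^{R_0} r^{n-1}c_i'(r)^2\,dr<\infty$: for $c_i(r)=r^{\beta_i}$ this is $\int_0^{R_0} r^{n-3+2\beta_i}\,dr$, and $n-3+2\beta_i=1-n-2\alpha_i<-1$ for all $n\ge2$ and $\alpha_i>0$, so the integral always diverges. (Alternatively, since $(X,d_X,m_X)$ is $\mathrm{RCD}^*(n-2,n-1)$, the Lichnerowicz bound $\lambda_1\ge n-1$ guarantees $\alpha_i\ge 1$ and makes the $L^2$ test work as well, but you did not invoke this.)
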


We remark that (\ref{har_fcn}) is known to experts, and similar results have been appeared in many papers, see \cite{CM97b}, \cite{H15}, \cite{X15}, \cite{H16} etc.
We provide a detailed proof here, one reason is that there is no a proof on general $\mathrm{RCD}$ space in other reference, another reason is that some facts in the proof is useful in our proof of Proposition \ref{prop3.2}.

\begin{proof}[Proof of Theorem \ref{eigenfcn}]
Firstly, we can prove that for $\alpha_{i}$, $\varphi_{i}$ ($i\geq0$) with $\lambda_{i}=\alpha_{i}(\alpha_{i}+n-2)$ and $-\Delta_{X}\varphi_{i}(x)=\lambda_{i}\varphi_{i}(x)$, a function of the form $u(x,r)=r^{\alpha_{i}}\varphi_{i}(x)$ is harmonic on $C(X)$.
This can be easily checked by the definitions and the cone structure of $C(X)$.
We omit the details, see (\ref{3.33333}) for similar calculations.

For any $s\in(0,R)$, let $\bar{H}^{(s)}$ denote the space of Lipschitz functions $v$ defined on $\overline{B_{p_{\infty}}(s)}$ such that $v(p_{\infty})=0$, $v|_{B_{p_{\infty}}(s)}\in W^{1,2}(B_{p_{\infty}}(s))$ is harmonic.
Let $v_{k}(x,r)=r^{\alpha_{k}}\varphi_{k}(x)$, then for any $k\geq1$, $v_{k}$ belongs to $\bar{H}^{(s)}$.

It is easy to see that the bilinear form $\tilde{E}_{s}$ defined by
\begin{align}
\tilde{E}_{s}(u,v)=\int_{B_{p_{\infty}}(s)}\langle Du,Dv\rangle dm_{C(X)}\nonumber
\end{align}
is an inner product on $\bar{H}^{(s)}$.

For any $v\in\bar{H}^{(s)}$ and $v_{k}$ ($k\geq1$), we have
\begin{align}\label{3.33333}
&\tilde{E}_{s}(v_{k},v)\\
=&\int_{B_{p_{\infty}}(s)}\langle D(r^{\alpha_{k}}\varphi_{k}(x)),Dv(x,r)\rangle dm_{C(X)}\nonumber\\
=&\int_{B_{p_{\infty}}(s)}\bigl(\langle \alpha_{k}r^{\alpha_{k}-1}\varphi_{k}(x)D_{\mathbb{R}}r, D_{\mathbb{R}}v(x,r)\rangle+\frac{1}{r^{2}}\langle r^{\alpha_{k}}D_{X}\varphi_{k}(x),D_{X}v(x,r)\rangle \bigr) dm_{C(X)}\nonumber\\
=&\int_{0}^{s}r^{n-1}dr\int_{X}\bigl(\langle \alpha_{k}r^{\alpha_{k}-1}\varphi_{k}(x)D_{\mathbb{R}}r, D_{\mathbb{R}}v(x,r) \rangle+\frac{1}{r^{2}}\langle r^{\alpha_{k}}D_{X}\varphi_{k}(x),D_{X}v(x,r)\rangle \bigr)dm_{X}\nonumber\\
=&\int_{0}^{s}\alpha_{k}r^{\alpha_{k}+n-2}dr\int_{X}\varphi_{k}(x)\frac{\partial}{\partial r}v(x,r) dm_{X}+\int_{0}^{s}r^{n+\alpha_{k}-3}dr\int_{X}\langle D_{X}\varphi_{k}(x),D_{X}v(x,r)\rangle dm_{X}\nonumber\\
=&\int_{0}^{s}\alpha_{k}r^{\alpha_{k}+n-2}dr\frac{d}{d r}\biggl(\int_{X}\varphi_{k}(x)v(x,r) dm_{X}\biggr)+\int_{0}^{s}r^{n+\alpha_{k}-3}dr\int_{X}\lambda_{k}\varphi_{k}(x)v(x,r) dm_{X}\nonumber\\
=&\biggl(\alpha_{k}r^{\alpha_{k}+n-2}\int_{X}\varphi_{k}(x)v(x,r) dm_{X}\biggr)\biggl|_{r=0}^{r=s}-\int_{0}^{s}\alpha_{k}(\alpha_{k}+n-2)r^{\alpha_{k}+n-3}dr\int_{X}\varphi_{k}(x)v(x,r) dm_{X}\nonumber\\
&+\int_{0}^{s}r^{n+\alpha_{k}-3}dr\int_{X}\lambda_{k}\varphi_{k}(x)v(x,r) dm_{X}\nonumber\\
=&\alpha_{k}s^{\alpha_{k}+n-2}\int_{X}\varphi_{k}(x)v(x,s) dm_{X},\nonumber
\end{align}
where in the last equality we use $n\geq2$ and $\alpha_{k}>0$.

In particular, for $v_{i}$, $v_{j}$ with $i,j\geq1$, we have
\begin{align}
\tilde{E}_{s}(v_{i},v_{j})=\alpha_{i}s^{2\alpha_{i}+n-2}\delta_{ij},
\end{align}
this means that $\{\frac{1}{\alpha_{i}^{\frac{1}{2}}s^{\alpha_{i}+\frac{n}{2}-1}}v_{i}\}_{i\geq1}$ is an orthonormal sequence with respect to the inner product $\tilde{E}_{s}$.

Thus by Bessel's inequality, for any $v\in\bar{H}^{(s)}$, we have
\begin{align}\label{3.33331}
\int_{B_{p_{\infty}}(s)}|Dv|^{2} dm_{C(X)}
&\geq\sum_{i=1}^{\infty}\biggl(\frac{1}{\alpha_{i}^{\frac{1}{2}}s^{\alpha_{i}+\frac{n}{2}-1}} \tilde{E}_{s}(v_{i},v) \biggr)^{2}\\
&=\sum_{i=1}^{\infty}\alpha_{i}s^{n-2}\biggl(\int_{X}\varphi_{i}(x)v(x,s) dm_{X}\biggr)^{2}.\nonumber
\end{align}

Now suppose $u$ is a harmonic function defined on $B_{p_{\infty}}(R)\subset C(X)$ with $u(p_{\infty})=0$.
For any $s\in(0,R)$, we consider $u^{(s)}(\cdot)=u(\cdot,s)$.
By the spectral theorem, $u^{(s)}(x)=\sum_{i=0}^{\infty} \tilde{c}_{i}^{(s)}\varphi_{i}(x)$ holds in $L^{2}(X,m_{X})$-sense, where
$\tilde{c}_{i}^{(s)}=\int_{X} u^{(s)}(x)\varphi_{i}(x)dm_{X}$
for every $i\geq0$.
By Parseval's identity, we have
\begin{align}\label{3.33332}
\sum_{i=0}^{\infty}(\tilde{c}_{i}^{(s)})^{2}=|u^{(s)}|^{2}_{L^{2}(X)}<\infty.
\end{align}

Since the restriction of $u$ on $\overline{B_{p_{\infty}}(s)}$ belongs to $\bar{H}^{(s)}$, (\ref{3.33331}) means
\begin{align}\label{3.33334}
\sum_{i=1}^{\infty}\alpha_{i}(\tilde{c}_{i}^{(s)})^{2}s^{n-2}\leq \int_{B_{p_{\infty}}(s)}|Du|^{2} dm_{C(X)}<\infty.
\end{align}

For any $k\in\mathbb{Z}^{+}$, let
$u_{k}(x,r)=\sum_{i=0}^{k}\tilde{c}_{i}^{(s)}(\frac{1}{s})^{\alpha_{i}}r^{\alpha_{i}}\varphi_{i}(x)$.
Then for any pair of integers $k<l$, it is easy to check
\begin{align}\label{3.441}
\int_{B_{p_{\infty}}(s)}|D(u_{k}-u_{l})|^{2}dm_{C(X)}=\sum_{i=k+1}^{l}(\tilde{c}_{i}^{(s)})^{2}\alpha_{i}s^{n-2},
\end{align}
and
\begin{align}\label{3.442}
\int_{B_{p_{\infty}}(s)}|u_{k}-u_{l}|^{2}dm_{C(X)}
=&\int_{B_{p_{\infty}}(s)}|\sum_{i=k+1}^{l}\tilde{c}_{i}^{(s)}(\frac{r}{s})^{\alpha_{i}}\varphi_{i}(x)|^{2} dm_{C(X)}\\
=&\int_{0}^{s}r^{n-1}dr\sum_{i=k+1}^{l}(\tilde{c}_{i}^{(s)})^{2}\biggl(\frac{r}{s}\biggr)^{2\alpha_{i}}\nonumber\\
\leq&\sum_{i=k+1}^{l}(\tilde{c}_{i}^{(s)})^{2}\frac{1}{n}s^{n}.\nonumber
\end{align}

From  (\ref{3.33332}), (\ref{3.33334}), (\ref{3.441}) and (\ref{3.442}), it is easy to see that $\{u_{k}\}$ is a Cauchy sequence with respect to the $W^{1,2}(B_{p_{\infty}}(s))$ norm, thus there is a limit $\tilde{u}\in W^{1,2}(B_{p_{\infty}}(s))$.

Then one can easily prove that $\tilde{u}$ is harmonic in $B_{p_{\infty}}(s)$.

Furthermore, since $|u_{k}|_{L^{2}(B_{p_{\infty}}(s))}$ has a upper bound independent of $k$, by Theorem 1.1 of \cite{J14}, $|Du_{k}|_{L^{\infty}(B_{p_{\infty}}(r))}$ has a upper bound independent of $k$ for every $r\in(0,s)$.
Note that all $u_{k}$ are continuous functions, by Arzel\`{a}-Ascoli Theorem, $u_{k}$ converge pointwise uniformly on any compact subset of $B_{p_{\infty}}(s)$ to a continuous function $\bar{u}(x,r)=\sum_{i=0}^{\infty}\tilde{c}_{i}^{(s)}(\frac{1}{s})^{\alpha_{i}}r^{\alpha_{i}}\varphi_{i}(x)$, which is a locally Lipschitz representative of $\tilde{u}$.

By an argument similar to P. 273-P. 275 in \cite{EV10}, one can prove that $u-\bar{u}\in W^{1,2}_{0}(B_{p_{\infty}}(s))$.
Since $u-\bar{u}$ is a harmonic function on $B_{p_{\infty}}(s)$, by the maximum principle (see Theorem 7.17 in \cite{C99}), we know $u$ and $\bar{u}$ coincide on $B_{p_{\infty}}(s)$.
Since $\bar{u}(p)=u(p)=0$, we have $\tilde{c}_{0}^{(s)}=0$.

Recall that in the spectral theorem, the Fourier coefficients of a $L^{2}$-function with respect to the base $\{\varphi_{i}\}$ are uniquely determined.
Hence one can easily prove that $\tilde{c}_{i}^{(s)}(\frac{1}{s})^{\alpha_{i}}$ is independent of the choice of $s\in(0,R)$ (we omit the details here), and we can denote $c_{i}=\tilde{c}^{(s)}_{i}(\frac{1}{s})^{\alpha_{i}}$.
Finally, (\ref{3.33336}) follows by (\ref{3.33334}).
\end{proof}

Let $\bar{H}\subset W^{1,2}(B_{p_{\infty}}(R))\cap C(B_{p_{\infty}}(R))$ be the space of harmonic functions $u$ such that $u(p_{\infty})=0$.
A function of the form $u(x,r)=r^{\alpha_{i}}\varphi_{i}(x)$ (for $i\geq1$) belongs to $\bar{H}$.
From Theorem \ref{eigenfcn}, it is easy to see that, for any $r\in(0,R)$, the bilinear form $\tilde{E}_{r}$ defined by
\begin{align}\label{3.111}
\tilde{E}_{r}(u,v)=\int_{B_{p_{\infty}}(r)}\langle Du,Dv\rangle dm_{C(X)}
\end{align}
is an inner product on $\bar{H}$.

For any $u, v\in\bar{H}$, since $\tilde{E}_{r}(u,v)$ is absolutely continuous with respect to $r$,
$\frac{d}{ds}\bigl|_{s=r}\tilde{E}_{s}(u,v)$ is well-defined for a.e. $r\in(0,R)$.
In fact, from the cone structure and Theorem \ref{eigenfcn}, we have the following stronger result:

\begin{prop}\label{prop3.2}
For any $u, v\in\bar{H}$, suppose $u(x,r)=\sum_{i=1}^{\infty}c_{i}r^{\alpha_{i}}\varphi_{i}(x)$ and
$v(x,r)=\sum_{i=1}^{\infty}\bar{c}_{i}r^{\alpha_{i}}\varphi_{i}(x)$ as in Theorem \ref{eigenfcn}, then
\begin{align}\label{3.11112}
\tilde{E}_{r}(u,v)=\sum_{i=1}^{\infty}c_{i}\bar{c}_{i}\alpha_{i}r^{2\alpha_{i}+n-2}.
\end{align}
Furthermore, $\tilde{E}_{r}(u,v)$ is a $C^{1}$-function with respect to $r\in(0,R)$, and
\begin{align}\label{3.11113}
\frac{d}{ds}\biggl|_{s=r}\tilde{E}_{s}(u,v)=\sum_{i=1}^{\infty}c_{i}\bar{c}_{i}(\alpha_{i}^{2}+\lambda_{i})r^{2\alpha_{i}+n-3}
\end{align}
for any $r\in(0,R)$.
\end{prop}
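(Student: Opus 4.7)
Both assertions follow from the series expansions of Theorem~\ref{eigenfcn}: formula~(\ref{3.11112}) comes from the mutual $\tilde{E}_r$-orthogonality of the modes $r^{\alpha_i}\varphi_i$, and (\ref{3.11113}) together with the $C^1$ regularity come from differentiating (\ref{3.11112}) termwise and justifying the interchange by a Weierstrass-type majorant argument on compact subintervals of $(0,R)$.

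For (\ref{3.11112}), set $u_k(x,r):=\sum_{i=1}^{k}c_i r^{\alpha_i}\varphi_i(x)$ and define $v_k$ analogously from the expansion of $v$. By the $W^{1,2}$-Cauchy argument at the end of the proof of Theorem~\ref{eigenfcn} (compare (\ref{3.441})--(\ref{3.442})), $u_k\to u$ and $v_k\to v$ in $W^{1,2}(B_{p_\infty}(r))$ for every $r\in(0,R)$. The computation displayed inside that proof already yielded $\tilde{E}_r(r^{\alpha_i}\varphi_i,r^{\alpha_j}\varphi_j)=\alpha_i r^{2\alpha_i+n-2}\delta_{ij}$, so bilinearity gives $\tilde{E}_r(u_k,v_k)=\sum_{i=1}^{k}c_i\bar{c}_i\alpha_i r^{2\alpha_i+n-2}$. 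The continuity of $\tilde{E}_r$ on $W^{1,2}(B_{p_\infty}(r))$ (from the pointwise bound $|\langle Df,Dg\rangle|\leq|Df|\,|Dg|$ and Cauchy--Schwarz) then passes the limit $k\to\infty$ through, giving (\ref{3.11112}); absolute convergence of the right-hand side is Cauchy--Schwarz applied to the sequences $\{\sqrt{\alpha_i}\,c_i r^{\alpha_i}\}$ and $\{\sqrt{\alpha_i}\,\bar{c}_i r^{\alpha_i}\}$, both of which are square-summable by (\ref{3.33336}).

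For (\ref{3.11113}) and the $C^1$ claim, differentiating the $i$-th term of (\ref{3.11112}) in $r$ produces $c_i\bar{c}_i\alpha_i(2\alpha_i+n-2)r^{2\alpha_i+n-3}$, and $\lambda_i=\alpha_i(\alpha_i+n-2)$ gives the termwise identity $\alpha_i(2\alpha_i+n-2)=\alpha_i^2+\lambda_i$, recovering (\ref{3.11113}) formally. To legitimize this, I verify uniform convergence of the differentiated series on every compact $[a,b]\subset(0,R)$ and invoke the classical criterion for term-by-term differentiation. Fix such an $[a,b]$, pick $r'\in(b,R)$, and apply (\ref{3.33336}) at $r'$ to both $u$ and $v$: the summands $\alpha_i c_i^2(r')^{2\alpha_i}$ and $\alpha_i\bar{c}_i^2(r')^{2\alpha_i}$ are uniformly bounded in $i$, hence Cauchy--Schwarz yields $|c_i\bar{c}_i|\leq M/(\alpha_i(r')^{2\alpha_i})$ for some $M$. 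Using $(\alpha_i^2+\lambda_i)/\alpha_i=2\alpha_i+n-2$, for every $r\in[a,b]$,
\begin{align*}
\bigl|c_i\bar{c}_i(\alpha_i^2+\lambda_i)r^{2\alpha_i+n-3}\bigr|\leq C\,(2\alpha_i+n-2)(b/r')^{2\alpha_i},
\end{align*}
with $C=C(a,b,n,M)$. Since $\alpha_i\to\infty$ at polynomial rate (via the Weyl-type upper bound $N(\lambda)\leq C\lambda^{(n-1)/2}$, available on the compact $\mathrm{RCD}^*(n-2,n-1)$ space $X$ from volume doubling and Poincar\'e, or directly from Proposition~\ref{prop2.1111} when $X$ is a cone cross-section), the majorant is summable and Weierstrass' M-test gives the required uniform convergence. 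The only non-bookkeeping input is this uniform bound, whose essential content is that the slack $r'>b$ allows the geometric factor $(r/r')^{2\alpha_i}$ to absorb any polynomial-in-$\alpha_i$ factor.
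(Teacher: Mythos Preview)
Your proof is correct and follows essentially the same route as the paper: (\ref{3.11112}) via the $W^{1,2}$-convergence of partial sums from Theorem~\ref{eigenfcn}, and (\ref{3.11113}) via uniform convergence of the differentiated series on compact subintervals of $(0,R)$, obtained from a slack-radius majorant together with a polynomial bound on $\#\{i:\alpha_i\le d\}$. The only cosmetic difference is that the paper sources this count from $\dim\mathcal{H}_d(C(X))\le Cd^{n-1}$ (via \cite{CM98b}, \cite{L97}, \cite{HKX13}) rather than from a Weyl-type upper bound on $X$, and phrases the uniform-convergence step as a Cauchy estimate grouped by $\lfloor\alpha_i\rfloor$ instead of an explicit M-test.
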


\begin{rem}
From the proof of Proposition \ref{prop3.2}, we obtain that, for any $u\in\bar{H}$ such that $u(x,r)=\sum_{i=1}^{\infty}c_{i}r^{\alpha_{i}}\varphi_{i}(x)$ as in Theorem \ref{eigenfcn}, then
\begin{align}
\sum_{i=1}^{\infty}c_{i}^{2}(\alpha_{i}^{2}+\lambda_{i})r^{2\alpha_{i}}<\infty
\end{align}
holds for any $r\in(0,R)$.
\end{rem}

\begin{proof}[Proof of Proposition \ref{prop3.2}]
We prove the case $u=v\in\bar{H}$.

Firstly, suppose $u$ is a function of the form $u(x,r)=\sum_{i=1}^{k}c_{i}r^{\alpha_{i}}\varphi_{i}(x)$, then from the calculations similar to the proof of Theorem \ref{eigenfcn}, we have
\begin{align}
\int_{B_{p_{\infty}}(s)}|Du|^{2}dm_{C(X)}&=\int_{0}^{s}\bigl[\sum_{i=1}^{k}c_{i}^{2}(\alpha_{i}^{2}+\lambda_{i})r^{2\alpha_{i}+n-3} \bigr]dr\\
&=\sum_{i=1}^{k}c_{i}^{2}\alpha_{i}s^{2\alpha_{i}+n-2},\nonumber
\end{align}
and
\begin{align}\label{3.22221}
\frac{d}{ds}\biggl|_{s=r}\tilde{E}_{s}(u,u)=\sum_{i=1}^{k}c_{i}^{2}(\alpha_{i}^{2}+\lambda_{i})r^{2\alpha_{i}+n-3}.
\end{align}

Now we consider the case of general $u\in\bar{H}$.
Let $u(x,r)=\sum_{i=1}^{\infty}c_{i}r^{\alpha_{i}}\varphi_{i}(x)$ as in Theorem \ref{eigenfcn}.
For any integer $k$, let $u_{k}:B_{p_{\infty}}(R)\rightarrow\mathbb{R}$ be functions defined by $u_{k}(x,r)=\sum_{i=1}^{k}c_{i}r^{\alpha_{i}}\varphi_{i}(x)$;
let $a_{k}:(0,R)\rightarrow\mathbb{R}$ be functions defined by $a_{k}(r)=\sum_{i=1}^{k}c_{i}^{2}(\alpha_{i}^{2}+\lambda_{i})r^{2\alpha_{i}+n-3}$.

For any interval $I=(0,t_{1}]\subset(0,R)$, we fix a $t<R$ such that $t>t_{1}$.

Suppose $r\in I$.
For any pair of sufficiently large positive integers $k,l$ with $k<l$, we have
\begin{align}\label{3.11111}
|a_{k}(r)-a_{l}(r)|&=\sum_{i=k+1}^{l}c_{i}^{2}(\alpha_{i}^{2}+\lambda_{i})r^{2\alpha_{i}+n-3}\\
&=r^{n-3}\sum_{i=k+1}^{l}\bigl(c_{i}t^{\alpha_{i}}\bigr)^{2}(\alpha_{i}^{2}+\lambda_{i})\biggl(\frac{r}{t}\biggr)^{2\alpha_{i}} \nonumber\\
&\leq r^{n-3}\sum_{d=\lfloor\alpha_{k+1}\rfloor}^{\infty}\sum_{d-1<\alpha_{i}\leq d} \bigl(c_{i}t^{\alpha_{i}}\bigr)^{2}(\alpha_{i}^{2}+\lambda_{i})\biggl(\frac{r}{t}\biggr)^{2\alpha_{i}} \nonumber\\
&\overset{(1)}\leq Cr^{n-3}\sum_{d=\lfloor\alpha_{k+1}\rfloor}^{\infty}\biggl[d^{2}\sum_{d-1<\alpha_{i}\leq d} \biggl(\frac{r}{t}\biggr)^{2d-2}\biggr] \nonumber\\
&\overset{(2)}\leq Cr^{n-3}\sum_{d=\lfloor\alpha_{k+1}\rfloor}^{\infty}d^{n+1} \biggl(\frac{r}{t}\biggr)^{2d-2}, \nonumber
\end{align}
where in the equality (1), we use the fact that $\alpha_{i}^{2}+\lambda_{i}=\alpha_{i}(2\alpha_{i}+n-2)\leq 3d^{2}$ for $d-1<\alpha_{i}\leq d$ and $i$ large and the fact that $c_{i}t^{\alpha_{i}}\rightarrow0$ as $i\rightarrow\infty$;
while in the inequality (2), we use the fact that the number of $i$'s such that $\alpha_{i}\leq d$ is bounded from above by $Cd^{n-1}$ for some constant $C=C(n)$.
The later fact can be proved as follows: since $\{v_{i}(x,r)=r^{\alpha_{i}}\varphi_{i}(x)\}_{i\geq1}$ are linear independent harmonic functions on $C(X)$, the number of $i$ such that $\alpha_{i}\leq d$ is bounded from above by $\textmd{dim}(\mathcal{H}_{d}(C(X)))$, while $\textmd{dim}(\mathcal{H}_{d}(C(X)))$ is bounded from above by $Cd^{n-1}$ (see e.g. \cite{CM98b}, \cite{L97}, \cite{HKX13}).

Note that $$\sum_{d=\lfloor\alpha_{k+1}\rfloor}^{\infty}d^{n+1} \biggl(\frac{r}{t}\biggr)^{2d-2}\rightarrow0$$
uniformly for $r\in I$ as $k\rightarrow\infty$.
Thus as $k\rightarrow\infty$, $a_{k}(r)$ converges uniformly on $I$ to some continuous function $a(r)=\sum_{i=1}^{\infty}c_{i}^{2}(\alpha_{i}^{2}+\lambda_{i})r^{2\alpha_{i}+n-3}$.

By (\ref{3.22221}), we have $\frac{d}{ds}\bigl|_{s=r}\tilde{E}_{s}(u_{k},u_{k})=a_{k}(r)$.
In addition, by Theorem \ref{eigenfcn}, $\tilde{E}_{s}(u_{k},u_{k})\rightarrow \tilde{E}_{s}(u,u)$ holds for any $s\in I$.
Thus we have
\begin{align}\label{3.22222}
\frac{d}{ds}\biggl|_{s=r}\tilde{E}_{s}(u,u)=a(r).
\end{align}
By the arbitrariness of $I$, (\ref{3.22222}) holds for any $r\in(0,R)$.

Finally, the general case when $u, v\in\bar{H}$ can easily done by polarization.
The proof is completed.
\end{proof}

\begin{rem}
For $u\in\bar{H}$ such that $u(x,r)=\sum_{i=1}^{\infty}c_{i}r^{\alpha_{i}}\varphi_{i}(x)$ as in Theorem \ref{eigenfcn}, then similar to the arguments as above, for any $r\in(0,R)$, we have
\begin{align}\label{3.22227}
\int_{X}u^{(r)}(x)^{2}dm_{X}=\sum_{i=1}^{\infty}c_{i}^{2}r^{2\alpha_{i}},
\end{align}
\begin{align}\label{3.22226}
\int_{X}|D_{X}u^{(r)}(x)|^{2}dm_{X}=\sum_{i=1}^{\infty}c_{i}^{2}r^{2\alpha_{i}}\lambda_{i}.
\end{align}
\end{rem}

The proof of the following lemma can be found in \cite{LW99}:

\begin{lem}[Lemma 1.2 in \cite{LW99}]\label{lem3.1}
Let $V$ be a $k$-dimensional subspace of a vector space $W$. Assume that $W$ is endowed with an inner product $L$ and a bilinear form $\Phi$. Then for any given linearly independent set of vectors $\{w_{1},\ldots,w_{k-1}\}\subset W$, there exists an orthonormal basis $\{v_{1},\ldots,v_{k}\}$ of $V$ with respect to $L$ such that $\Phi(v_{i},w_{j}) = 0$ for all $1\leq j < i\leq k$.
\end{lem}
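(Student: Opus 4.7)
The plan is to build the basis $\{v_{1},\ldots,v_{k}\}$ inductively but in reverse order, choosing $v_{k}$ first and then $v_{k-1},v_{k-2},\ldots,v_{1}$. The motivation for running the induction backwards is a dimension count: if one tried the natural forward order, then picking $v_{i}$ after $v_{1},\ldots,v_{i-1}$ would impose $(i-1)$ linear constraints from $L$-orthogonality to the earlier $v_{j}$'s together with $(i-1)$ constraints from the $\Phi$-condition, giving $2(i-1)$ linear constraints on $V$, which exceeds $\dim V = k$ once $i > \lfloor k/2\rfloor + 1$. Running the induction backwards trades a decreasing number of $L$-conditions against an increasing number of $\Phi$-conditions, keeping the total count equal to $k-1$ at every step.

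More precisely, I would first choose a nonzero vector $v_{k}\in V$ satisfying the $k-1$ linear conditions $\Phi(v_{k},w_{j})=0$ for $j=1,\ldots,k-1$; such a vector exists because these are $k-1$ linear conditions on the $k$-dimensional space $V$, and I normalize so that $L(v_{k},v_{k})=1$. Having constructed $v_{k},v_{k-1},\ldots,v_{i+1}$, I would then choose $v_{i}\in V$ to satisfy the $(k-i)$ linear conditions $L(v_{i},v_{j})=0$ for $j=i+1,\ldots,k$ together with the $(i-1)$ linear conditions $\Phi(v_{i},w_{j})=0$ for $j=1,\ldots,i-1$. These are $k-1$ linear conditions on $V$, so the solution space has dimension at least $1$, and I pick a vector there of unit $L$-norm. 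At the last step, $v_{1}$ is determined by the $k-1$ conditions $L(v_{1},v_{j})=0$ for $j=2,\ldots,k$ alone.

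To verify that the output is what is claimed, note that $\{v_{1},\ldots,v_{k}\}$ are pairwise $L$-orthogonal unit vectors; since $L$ is a positive-definite inner product, they are linearly independent, and since their number equals $\dim V$, they form an $L$-orthonormal basis of $V$. The required condition $\Phi(v_{i},w_{j})=0$ for $1\leq j<i\leq k$ is built into the inductive construction. There is no serious obstacle: the one conceptual point is the reversal of the induction order, which is exactly the device that keeps the linear-algebra count $(k-i)+(i-1)=k-1 < k = \dim V$ balanced at every stage, ensuring the existence of a nonzero admissible $v_{i}$.
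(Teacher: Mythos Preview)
The paper does not give its own proof of this lemma; it simply cites \cite{LW99} (Lemma~1.2). Your backward-induction argument is correct and is the standard proof of this fact: the key point is precisely the dimension count $(k-i)+(i-1)=k-1<k=\dim V$, which guarantees a nonzero admissible $v_{i}$ at each stage, and positive-definiteness of $L$ then lets you normalize and conclude that the resulting $L$-orthonormal system is a basis of $V$.
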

Let $\tilde{H}$ be a $k$-dimensional subspace of $\bar{H}$.
By Lemma \ref{lem3.1}, for any $r\in(0,R)$, we can always find $\{v_{1},\ldots,v_{k}\}$ such that they form an $\tilde{E}_{r}$-orthonormal base of $\tilde{H}$ and
$$\int_{X}v_{i}^{(r)}(x)\varphi_{j}(x)dm_{X}=0$$
for all $1\leq j < i\leq k$.
Then by the min-max principle, we have
\begin{align}\label{3.22223}
\lambda_{i}\int_{X}v_{i}^{(r)}(x)^{2}dm_{X}\leq\int_{X}|D_{X}v_{i}^{(r)}(x)|^{2}dm_{X}
\end{align}
for $1\leq i\leq k$.
For any $1\leq i\leq k$, suppose $v_{i}(x,r)=\sum_{j=1}^{\infty}c_{i,j}r^{\alpha_{j}}\varphi_{j}(x)$ as in Theorem \ref{eigenfcn}, then by (\ref{3.22227}), (\ref{3.22226}) and (\ref{3.22223}), we have
\begin{align}\label{3.11114}
\lambda_{i}\sum_{j=1}^{\infty}c_{i,j}^{2}r^{2\alpha_{j}}\leq \sum_{j=1}^{\infty}c_{i,j}^{2}r^{2\alpha_{j}}\lambda_{j}.
\end{align}

For $1\leq i\leq k$, by (\ref{3.11112}), (\ref{3.11114}) and the Cauchy-Schwarz inequality, we have
\begin{align}\label{3.22224}
\lambda_{i}^{\frac{1}{2}}&=\lambda_{i}^{\frac{1}{2}}\tilde{E}_{r}(v_{i},v_{i}) =\lambda_{i}^{\frac{1}{2}}r^{n-2}\sum_{j=1}^{\infty}c_{i,j}^{2}\alpha_{j}r^{2\alpha_{j}}\\
&\leq \frac{r^{n-2}}{2}\biggl[\lambda_{i}\sum_{j=1}^{\infty}c_{i,j}^{2}r^{2\alpha_{j}}+
\sum_{j=1}^{\infty}c_{i,j}^{2}\alpha_{j}^{2}r^{2\alpha_{j}}\biggr]\nonumber\\
&\leq \frac{r^{n-2}}{2}\biggl[\sum_{j=1}^{\infty}c_{i,j}^{2}\lambda_{j}r^{2\alpha_{j}}+
\sum_{j=1}^{\infty}c_{i,j}^{2}\alpha_{j}^{2}r^{2\alpha_{j}}\biggr]\nonumber\\
&=\frac{1}{2}\sum_{j=1}^{\infty}c_{i,j}^{2}r^{2\alpha_{j}+n-2}(\lambda_{j}+\alpha_{j}^{2}).\nonumber
\end{align}
Summing up (\ref{3.22224}) for $1\leq i\leq k$, and by Proposition \ref{prop3.2}, we have
\begin{align}\label{3.22225}
\sum_{i=1}^{k}\lambda_{i}^{\frac{1}{2}}\leq\frac{r}{2}\sum_{i=1}^{k}\frac{d}{ds}\biggl|_{s=r}\tilde{E}_{s}(v_{i},v_{i}).
\end{align}

For $s,t\in(0,R)$, let $\mathrm{det}_{s}\tilde{E}_{t}$ be the determinant of ${\tilde{E}}_{t}$ with respect to ${\tilde{E}}_{s}$ on $\tilde{H}$.

\begin{lem}\label{lem3.4}
The function $s\mapsto\ln\mathrm{det}_{1}\tilde{E}_{s}$ is $C^{1}$ in $(0,R)$.
Furthermore, suppose $\{v_{1},\ldots,v_{k}\}$ is an $\tilde{E}_{r}$-orthonormal base of $\tilde{H}$, then
\begin{align}\label{3.23331}
\frac{d}{ds}\biggl{|}_{s=r} \ln\mathrm{det}_{1}\tilde{E}_{s} =\sum_{i=1}^{k}\frac{d}{ds}\biggl{|}_{s=r}\tilde{E}_{s}(v_{i},v_{i}).
\end{align}
\end{lem}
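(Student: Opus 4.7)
The plan is to reduce the statement to a computation with the Gram matrix of $\tilde{E}_s$ in a fixed basis, and then use Jacobi's formula (or equivalently a change of basis at the point $s=r$). First I would fix an arbitrary basis $\{w_{1},\ldots,w_{k}\}$ of the $k$-dimensional space $\tilde{H}$ and consider the Gram matrix $A(s)=(\tilde{E}_{s}(w_{i},w_{j}))_{1\le i,j\le k}$. By Proposition \ref{prop3.2} (applied to each entry, with polarization for the off-diagonal ones), every entry $s\mapsto\tilde{E}_{s}(w_{i},w_{j})$ is $C^{1}$ on $(0,R)$, so $A(\cdot)$ is a $C^{1}$ matrix-valued map. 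Since $\tilde{E}_{s}$ is an inner product on $\tilde{H}$ for each $s\in(0,R)$, $A(s)$ is positive definite, in particular $\det A(s)>0$.

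By the definition of $\det_{1}\tilde{E}_{s}$, one has $\det_{1}\tilde{E}_{s}=\det A(s)/\det A(1)$ (this ratio is manifestly independent of the chosen basis $\{w_{j}\}$), hence $s\mapsto\ln\det_{1}\tilde{E}_{s}=\ln\det A(s)-\ln\det A(1)$ is $C^{1}$ on $(0,R)$, which proves the first assertion.

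For the derivative formula, fix $r\in(0,R)$ and an $\tilde{E}_{r}$-orthonormal basis $\{v_{1},\ldots,v_{k}\}$. Writing $v_{i}=\sum_{j}P_{ij}w_{j}$ and setting $B(s):=PA(s)P^{T}$, I get the Gram matrix of $\tilde{E}_{s}$ in the $v$-basis, which is still $C^{1}$ in $s$ and satisfies $B(r)=I$. Since $\det B(s)=(\det P)^{2}\det A(s)$, the two logarithms differ by a constant, so
\begin{equation*}
\frac{d}{ds}\bigg|_{s=r}\ln\det{}_{1}\tilde{E}_{s}=\frac{d}{ds}\bigg|_{s=r}\ln\det B(s).
\end{equation*}
Applying Jacobi's formula together with $B(r)^{-1}=I$ gives
\begin{equation*}
\frac{d}{ds}\bigg|_{s=r}\ln\det B(s)=\operatorname{tr}\!\bigl(B(r)^{-1}B'(r)\bigr)=\operatorname{tr}\bigl(B'(r)\bigr)=\sum_{i=1}^{k}B_{ii}'(r)=\sum_{i=1}^{k}\frac{d}{ds}\bigg|_{s=r}\tilde{E}_{s}(v_{i},v_{i}),
\end{equation*}
which is \eqref{3.23331}.

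There is really no hard step here: the whole argument is linear algebra once the regularity of a single entry is in hand, and the latter has been established in Proposition \ref{prop3.2}. The only place requiring a moment of care is the use of Jacobi's formula, which in our setting needs $A(s)$ (equivalently $B(s)$) to be $C^{1}$ and invertible on a neighborhood of $r$; both are guaranteed by Proposition \ref{prop3.2} and the positive-definiteness of $\tilde{E}_{s}$. If one prefers to avoid Jacobi's formula entirely, one can expand $\det B(s)$ at $s=r$: since $B(r)=I$, the multilinearity of the determinant gives $\det B(s)=1+\operatorname{tr}(B'(r))(s-r)+o(s-r)$, so $\ln\det B(s)=\operatorname{tr}(B'(r))(s-r)+o(s-r)$, yielding the same conclusion directly.
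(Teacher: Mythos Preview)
Your proof is correct and follows essentially the same route as the paper's: both reduce to the Gram matrix of $\tilde{E}_s$ in a basis, invoke Proposition~\ref{prop3.2} for the $C^1$ regularity of each entry, and then compute the derivative at $s=r$ by exploiting that the Gram matrix in an $\tilde{E}_r$-orthonormal basis equals the identity there. The only cosmetic difference is that the paper additionally chooses its $\tilde{E}_r$-orthonormal basis to diagonalize $\tilde{E}_1$ and then computes $\frac{d}{ds}\ln\det(h^{(s)}_{ij})$ via a cofactor expansion and induction on $k$, whereas you invoke Jacobi's formula (or the first-order expansion of $\det$ near $I$) directly; your packaging is a bit cleaner, but the content is identical.
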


\begin{proof}
The conclusion that the function $s\mapsto\ln\mathrm{det}_{1}\tilde{E}_{s}$ is $C^{1}$ is obvious from the definitions and Proposition \ref{prop3.2}.
Thus we only need to prove (\ref{3.23331}).

From Theorem \ref{eigenfcn} and Proposition \ref{prop3.2}, it is easy to see that the right hand side of (\ref{3.23331}) is independent of the choice of the $\tilde{E}_{r}$-orthonormal bases of $\tilde{H}$.

Suppose $\{v_{1},\ldots,v_{k}\}$ is an $\tilde{E}_{r}$-orthonormal base of $\tilde{H}$ that diagonalizes $\tilde{E}_{1}$.
Suppose $\tilde{E}_{1}(v_{i},v_{j})=\delta_{ij}\mu_{i}$ for $\mu_{i}>0$.
For any $s\in(0,R)$, let $\{h^{(s)}_{ij}\}_{i,j}$ be the matrix given by $h^{(s)}_{ij}=\tilde{E}_{s}(v_{i},v_{j})$, then
$$\ln\mathrm{det}_{1}\tilde{E}_{s}=\ln\mathrm{det}(h^{(s)}_{ij})-\ln\bigl(\prod_{i=1}^{k}\mu_{i}\bigr).$$
Let $c^{(s)}_{ij}$ be the cofactors of $h^{(s)}_{ij}$.
Note that $c^{(r)}_{ij}=\delta_{ij}$.
By Proposition \ref{prop3.2}, $h^{(s)}_{ij}$ and $c^{(s)}_{ij}$ are $C^{1}$-functions.
Hence
\begin{align}
\frac{d}{ds}\biggl{|}_{s=r} \ln\mathrm{det}_{1}\tilde{E}_{s}&= \frac{d}{ds}\biggl{|}_{s=r}\ln\mathrm{det}(h^{(s)}_{i,j}) =\frac{d}{ds}\biggl{|}_{s=r}\ln\sum_{j=1}^{k}h^{(s)}_{1j}c^{(s)}_{1j}\\
&=\frac{d}{ds}\biggl{|}_{s=r}h^{(s)}_{11}+\frac{d}{ds}\biggl{|}_{s=r}c^{(s)}_{11}.\nonumber
\end{align}
By induction, we have
\begin{align}
\frac{d}{ds}\biggl{|}_{s=r} \ln\mathrm{det}_{1}\tilde{E}_{s}= \sum_{i=1}^{k}\frac{d}{ds}\biggl{|}_{s=r}h^{(s)}_{ii},
\end{align}
i.e. (\ref{3.23331}) holds.
\end{proof}

By (\ref{3.22225}) and Lemma \ref{lem3.4}, we have

\begin{prop}\label{prop3.6}
Let $\tilde{H}$ be a $k$-dimensional subspace of $\bar{H}$, then
\begin{align}
\sum_{i=1}^{k}\lambda_{i}^{\frac{1}{2}}\leq\frac{r}{2}\frac{d}{ds}\biggl{|}_{s=r}\ln\mathrm{det}_{1}\tilde{E}_{s}
\end{align}
holds for every $r\in(0,R)$.
\end{prop}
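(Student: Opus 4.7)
The proposition is essentially a synthesis of two ingredients already assembled in the section, and my plan is to combine them directly after justifying that the specific orthonormal basis used to prove the eigenvalue estimate can be replaced by any orthonormal basis on the right-hand side.

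Fix $r\in(0,R)$. The strategy is to apply Lemma \ref{lem3.1} with $W=\bar H$, $V=\tilde H$, inner product $L=\tilde E_r$, bilinear form $\Phi(u,v)=\int_X u^{(r)}v^{(r)}\,dm_X$, and the auxiliary vectors $w_j(x,s)=s^{\alpha_j}\varphi_j(x)$ for $1\le j\le k-1$. This produces an $\tilde E_r$-orthonormal basis $\{v_1,\ldots,v_k\}$ of $\tilde H$ satisfying $\int_X v_i^{(r)}\varphi_j\,dm_X=0$ for all $1\le j<i\le k$. Such an orthogonality condition is precisely what allows the min-max principle to yield (\ref{3.22223}), namely $\lambda_i\int_X(v_i^{(r)})^{2}\,dm_X\le\int_X|D_X v_i^{(r)}|^{2}\,dm_X$ for each $1\le i\le k$.

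Expanding $v_i(x,s)=\sum_j c_{i,j}s^{\alpha_j}\varphi_j(x)$ by Theorem \ref{eigenfcn} and substituting the Parseval-type identities (\ref{3.22227}) and (\ref{3.22226}) converts this into the $\ell^2$-inequality (\ref{3.11114}). Combined with the normalization $\tilde E_r(v_i,v_i)=1$, which (via (\ref{3.11112})) reads $r^{n-2}\sum_j c_{i,j}^2\alpha_j r^{2\alpha_j}=1$, a Cauchy-Schwarz step using $\alpha_j\le\tfrac12(\alpha_j^2+\lambda_j\cdot\lambda_j^{-1}\alpha_j^2+\cdots)$ delivers the per-eigenvalue bound (\ref{3.22224}); summing over $i$ produces (\ref{3.22225}), that is
\begin{equation*}
\sum_{i=1}^{k}\lambda_i^{1/2}\le\frac{r}{2}\sum_{i=1}^{k}\frac{d}{ds}\bigg|_{s=r}\tilde E_s(v_i,v_i).
\end{equation*}

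The final step is to recognize that the right-hand sum is basis-independent. This is exactly Lemma \ref{lem3.4}, which asserts $\frac{d}{ds}|_{s=r}\ln\det_1\tilde E_s=\sum_{i=1}^{k}\frac{d}{ds}|_{s=r}\tilde E_s(v_i,v_i)$ for any $\tilde E_r$-orthonormal basis of $\tilde H$. Applying this identity to the special basis constructed above yields the desired inequality. I do not expect any genuine obstacle: the nontrivial content, namely the use of Lemma \ref{lem3.1} to produce a basis compatible with both the min-max principle and the spectral expansion, is already exploited in (\ref{3.22225}), and Lemma \ref{lem3.4} supplies the coordinate-free interpretation of the resulting sum. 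The only point that deserves care is verifying that the expression $\frac{d}{ds}|_{s=r}\tilde E_s(v_i,v_i)$ is indeed well defined at every $r\in(0,R)$, which is guaranteed by the $C^1$ regularity established in Proposition \ref{prop3.2}.
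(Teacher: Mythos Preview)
Your proposal is correct and follows exactly the paper's own argument: the proposition is stated immediately after (\ref{3.22225}) and Lemma~\ref{lem3.4} with the single line ``By (\ref{3.22225}) and Lemma \ref{lem3.4}, we have'', and your write-up just unpacks how those two ingredients fit together. The only cosmetic blemish is your description of the Cauchy--Schwarz/AM--GM step, where the displayed inequality is garbled; the actual inequality used in (\ref{3.22224}) is $\lambda_i^{1/2}\alpha_j\le\tfrac12(\lambda_i+\alpha_j^{2})$ applied termwise, but since you correctly cite (\ref{3.22224}) this does not affect the validity of the argument.
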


\section{Taking limits}\label{s4}

From now on, let $\tau$ and $\delta$ be fixed sufficiently small positive numbers, and $A$ be a fixed large number.
At the end of Section \ref{s5}, $\tau$ and $\delta$ will eventually converge to $0$ while $A$ converging to $\infty$.

Suppose $(M^{n},g)$ is a complete manifold with nonnegative Ricci curvature and satisfies (\ref{1.01111}), where $\mu$ is the volume element determined by $g$.
Let $\rho$ be the distance determined by $g$, $p$ be a fixed point on $M$, and denote by $B_{p}(r)=\{x\in M\mid\rho(x,p)< r\}$.
We assume $(M^{n},g)$ has a unique tangent cone at infinity, which is denoted by $(M_{\infty}, p_{\infty}, \rho_{\infty},\nu_{\infty})=(C(X),d_{C(X)},m_{C(X)})$.
For any integer $d\geq1$, let
$$\mathcal{H}_{d}(M)=\{u\in C^{\infty}(M)\mid \Delta u=0, u(p)=0, |u(x)|\leq C(\rho(x)^{d}+1) \text{ for some }C\},$$
and $h_{d}:= \textmd{dim}\mathcal{H}_{d}(M)$.

Take $k_{0}=h_{0}=0$.
For any positive integer $i$, let $k_{i}=h_{i}-h_{i-1}$.

By Cheng-Yau's gradient estimate, for any $u\in \mathcal{H}_{d}(M)$, we always have
\begin{align}\label{4.11111}
|\nabla u|(x)\leq C(\rho(x)^{d-1}+1)
\end{align}
for some positive constant $C$ which depends on $u$.

For any $r>0$, let ${E}_{r}$ be the inner product of $\mathcal{H}_{d}(M)$ given by $${E}_{r}(u,v)=\int_{B_{p}(r)}\langle \nabla u, \nabla v \rangle d\mu.$$

For each positive integer $d$, by induction we can decompose the space $\mathcal{H}_{d}(M)$ with respect to the inner product ${E}_{1}$ into a direct sum
$$\mathcal{H}_{d}(M)=K^{1}\oplus\ldots\oplus K^{d},$$
so that each $K^{i}$ is a subspace of $\mathcal{H}_{d}(M)$ consisting of harmonic functions of growth order not grater than $i$ but larger than $i-1$.
Note that dimension of $K^{i}$ is $k_{i}$.

For $s,t>0$, let $\mathrm{det}_{s}{E}_{t}$ be the determinant of ${E}_{t}$ with respect to ${E}_{s}$.
Define $g(r):=\mathrm{det}_{1}{E}_{r}$.

Let $\{v_{1},\ldots,v_{h_{d}}\}$ be an $E_{1}$-orthonormal base of $\mathcal{H}_{d}(M)$ such that $\{v_{h_{i-1}+1},\ldots,v_{h_{i}}\}$ is an $E_{1}$-orthonormal base of $K^{i}$, then we have
\begin{align}\label{4.11011}
\textmd{det}_{1}{E}_{r}&=\textmd{det}({E}_{r}(v_{i},v_{j})) =\sum_{\sigma\in S_{h_{d}}}\biggl(\mathrm{sgn}(\sigma)\prod_{i=1}^{h_{d}}{E}_{r}(v_{i},v_{\sigma(i)}) \biggr) \\
&\leq C\prod_{i=1}^{h_{d}}\int_{B_{p}(r)}\langle Dv_{i},Dv_{i}\rangle d\mu
\leq C (1+r^{\sum_{i=1}^{d}(2i+n-2)k_{i}}),\nonumber
\end{align}
where $C$ are constants depending only on $\mathcal{H}_{d}(M)$ and $M$.
In (\ref{4.11011}), we use Cauchy-Schwarz inequality in the first inequality, and use (\ref{4.11111}) and the volume comparison theorem in the second inequality.
In conclusion, the function $g(r)$ is positive, nondecreasing and satisfies
$$g(r)\leq C(1+r^{s})$$
with
$$s=\sum_{i=1}^{d}(2i+n-2)k_{i}.$$

The proof of the following lemma can be found in \cite{CM97a}:
\begin{lem}[Lemma 3.1 in \cite{CM97a}]\label{4.11112}
Suppose $f_{1},\ldots,f_{l}$ are nonnegative nondecreasing functions defined on $(0,\infty)$ such that none of the $f_{i}$ vanishes identically, and there are $d,K>0$ such that $f_{i}(r)\leq K(r^{d}+1)$ for all $i$.
Then for every $\Omega>1$, $k\leq l$ and any $C>\Omega^{\frac{ld}{l-k+1}}$, there exist $k$ of these functions $f_{a_{1}},\ldots,f_{a_{k}}$ and infinitely many integers $m$ such that
$$f_{a_{i}}(\Omega^{m+1})\leq Cf_{a_{i}}(\Omega^{m})$$
for every $1\leq i\leq k$.
\end{lem}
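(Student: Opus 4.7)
The plan is to use a counting/pigeonhole argument. For each $i \in \{1, \ldots, l\}$, I will control how often the ``bad'' inequality $f_i(\Omega^{m+1}) > C f_i(\Omega^m)$ can occur, and then show that the ``good'' indices $m$ (those at which at least $k$ of the $f_i$'s do not grow badly) form an infinite set; a further pigeonhole over the ${l \choose k}$ possible $k$-subsets yields the required $k$ functions.

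First, fix $i$. Since $f_i$ is nondecreasing and not identically zero, pick $m_0(i)$ so that $f_i(\Omega^{m_0(i)}) > 0$. For a positive integer $M$, define
\begin{equation*}
N_i(M) := \#\bigl\{ m_0(i) \leq m \leq M : f_i(\Omega^{m+1}) > C f_i(\Omega^m) \bigr\}.
\end{equation*}
Iterating the defining inequality on these $N_i(M)$ indices and using the monotonicity of $f_i$ on the remaining ones gives
\begin{equation*}
C^{N_i(M)} f_i(\Omega^{m_0(i)}) \leq f_i(\Omega^{M+1}) \leq K(\Omega^{d(M+1)} + 1).
\end{equation*}
Taking logarithms yields $N_i(M) \leq \frac{d M \log \Omega}{\log C} + O_i(1)$, where the error depends on $i$ but not on $M$.

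Next, let $B(M) := \{m \leq M : f_i(\Omega^{m+1}) > C f_i(\Omega^m) \text{ for at least } l-k+1 \text{ indices } i\}$. Counting incidences in two ways,
\begin{equation*}
(l-k+1) |B(M)| \leq \sum_{i=1}^{l} N_i(M) \leq \frac{l d M \log \Omega}{\log C} + O(1),
\end{equation*}
so $|B(M)| \leq \frac{l d}{(l-k+1) \log_\Omega C} M + O(1)$. The hypothesis $C > \Omega^{ld/(l-k+1)}$ gives $\frac{ld}{(l-k+1) \log_\Omega C} < 1$, hence the set $G := \mathbb{N} \setminus B$ of ``good'' indices (those $m$ where at most $l-k$ of the $f_i$'s blow up by a factor $>C$, equivalently, at least $k$ of them satisfy $f_i(\Omega^{m+1}) \leq C f_i(\Omega^m)$) is infinite.

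Finally, for each good $m \in G$, record the subset $S(m) \subseteq \{1, \ldots, l\}$ of ``well-behaved'' indices, which has $|S(m)| \geq k$; by shrinking, assume $|S(m)| = k$. Since there are only ${l \choose k}$ such $k$-subsets, the pigeonhole principle produces a single $k$-subset $\{a_1, \ldots, a_k\}$ with $S(m) = \{a_1, \ldots, a_k\}$ for infinitely many $m \in G$, which is exactly the desired conclusion. The only mildly subtle point is ensuring the exponent threshold $\Omega^{ld/(l-k+1)}$ is sharp enough to force $|B(M)|/M$ strictly below $1$; this is where the hypothesis $C > \Omega^{ld/(l-k+1)}$ is used essentially, and no further effort beyond the estimate above is required.
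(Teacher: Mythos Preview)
Your argument is correct and is essentially the standard Colding--Minicozzi counting/pigeonhole proof. Note that the present paper does not reproduce a proof of this lemma at all: it simply states the result and cites \cite{CM97a} (Lemma~3.1 there), so there is no alternative proof in the paper to compare against; your write-up matches the original argument in \cite{CM97a}.
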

In Lemma \ref{4.11112}, take $k=l=1$, $f_{1}=g$, by the above properties of $g$, if we take $\Omega=\beta:=1+\frac{1}{d}$, $C=\beta^{s+1}$, then we have
$g(\beta^{m_{i}+1})\leq \beta^{s+1}g(\beta^{m_{i}})$ for a sequence of positive numbers $\{m_{i}\}_{i\in\mathbb{N}^{+}}$ with $m_{i}\rightarrow\infty$.
For every $i$, let $R_{i}=\beta^{m_{i}}$, then we have
\begin{align}\label{5.1111}
g(\beta R_{i})\leq \beta^{s+1}g(R_{i}).
\end{align}

For fixed $i$, let $\{u_{1},\ldots,u_{h_{d}}\}$ be an ${E}_{R_{i}}$-orthonormal base of $\mathcal{H}_{d}(M)$ that diagonalizes ${E}_{\beta R_{i}}$, and let
\begin{align}\label{5.1112}
J:=\bigl\{1\leq j\leq h_{d} \bigl| {E}_{\beta R_{i}}(u_{j},u_{j})\leq\beta^{\frac{A(s+1)}{h_{d}}}\bigr\}.
\end{align}

Because
$$\beta^{s+1}\geq\prod_{j=1}^{h_{d}}{E}_{\beta R_{i}}(u_{j},u_{j})=\prod_{j\in J}{E}_{\beta R_{i}}(u_{j},u_{j})\cdot\prod_{j\notin J}{E}_{\beta R_{i}}(u_{j},u_{j})\geq\beta^{\frac{A(s+1)}{h_{d}}(h_{d}-\#J)},$$
we have
$$\# J\geq h_{d}\frac{A-1}{A}.$$

Take
\begin{align}\label{4.33331}
k=h_{d}\frac{A-1}{A},
\end{align}
and choose a subset $J'$ of $J$ such that $\# J'=k$, let
$$\bar{H}_{i}:=\textmd{span}\{u_{j}\}_{j\in J'}.$$
Without loss of generality, we assume $J'=\{1,2,\ldots,k\}$.

Let $u\in \bar{H}_{i}$ with $u=\Sigma_{j=1}^{k}b_{j}u_{j}$, then for any $r\in [R_{i},\beta R_{i}]$, we have
\begin{align}\label{4.00000}
&{E}_{r}(u,u)\geq {E}_{R_{i}}(u,u)=\sum_{j=1}^{k}b_{j}^{2}{E}_{R_{i}}(u_{j},u_{j})\\
\geq &\sum_{j=1}^{k}b_{j}^{2}\beta^{-\frac{A(s+1)}{h_{d}}}{E}_{\beta R_{i}}(u_{j},u_{j}) =\beta^{-\frac{A(s+1)}{h_{d}}}{E}_{\beta R_{i}}(u,u).\nonumber
\end{align}

Take
$r_{i}=(1+\frac{1-\tau}{d})R_{i}$, and $g_{i}=r_{i}^{-2}g$.
As $i\rightarrow\infty$, there is a subsequence of $\{r_{i}\}$ (still denoted by $\{r_{i}\}$) such that $(M_{i}, p, \rho_{i},\nu_{i})=(M,p,g_{i})$ converge to $(M_{\infty}, p_{\infty}, \rho_{\infty},\nu_{\infty})=(C(X),p_{\infty},d_{C(X)},m_{C(X)})$ in the pmGH sense.


Take
\begin{align}
t_{1}&=\frac{d}{d+1-\tau}, \\
t_{2}&=\frac{d+1}{d+1-\tau},\\
t_{3}&=\frac{1}{2}(t_{2}+1)=\frac{2d+2-\tau}{2d+2-2\tau}.
\end{align}

Denote by $B_{p}^{(i)}(r)=\{x\in M_{i}\mid\rho_{i}(x,p)< r\}$, $B_{p_{\infty}}(r)=\{x\in C(X)\mid d_{C(X)}(x,p_{\infty})< r\}$.
For any $r>0$, let ${E}^{(i)}_{r}$ be the inner product of $\mathcal{H}_{d}(M_{i})$ given by ${E}^{(i)}_{r}(u,v)=\int_{B^{(i)}_{p}(r)}\langle \nabla^{(i)} u, \nabla^{(i)} v \rangle d\nu_{i}$.

For any $u\in \bar{H}_{i}$, define $u^{(i)}:B_{p}^{(i)}(t_{2})\rightarrow\mathbb{R}$ by
\begin{align}\label{4.00001}
u^{(i)}(x)=\frac{(\mu(B_{p}(r_{i})))^{\frac{1}{2}}}{r_{i}({E}_{r_{i}}(u,u))^{\frac{1}{2}}}u(x).
\end{align}
Then $u^{(i)}$ is a harmonic function satisfying
\begin{align}\label{4.00002}
{E}^{(i)}_{1}(u^{(i)},u^{(i)})=1,
\end{align}
\begin{align}\label{4.00003}
{E}^{(i)}_{t_{2}}(u^{(i)},u^{(i)})\leq \beta^{\frac{A(s+1)}{h_{d}}}.
\end{align}

By (\ref{4.00003}) and Li-Schoen's mean-value inequality (see \cite{LS84}), we have
\begin{align}
\sup_{B^{(i)}_{p}(t_{3})}|\nabla^{(i)}u^{(i)}|\leq C(n,t_{2},\alpha)\beta^{\frac{A(s+1)}{2h_{d}}}.
\end{align}
Since $u^{(i)}(p)=0$, we have
\begin{align}
\sup_{B^{(i)}_{p}(t_{3})}|u^{(i)}|\leq C(n,t_{2},\alpha)\beta^{\frac{A(s+1)}{2h_{d}}}.
\end{align}

Note that the classical Arzel\`{a}-Ascoli Theorem can be generalized to functions living on different spaces, see e.g. Definition 2.11 and Proposition 2.12 in \cite{MN14}.
Thus up to a subsequence, $u^{(i)}$ converge pointwise and uniformly on any compact set $K\subset\subset B_{p_{\infty}}(t_{3})$ to some function $u^{(\infty)}$ defined on $B_{p_{\infty}}(t_{3})$.
Since every $u^{(i)}$ is a harmonic function, by  \cite{H11} (see also \cite{Xu14} \cite{D02}), $u^{(\infty)}$ is a harmonic function on $B_{p_{\infty}}(t_{3})$.

For harmonic functions defined on $B_{p_{\infty}}(t_{3})$, we use $\tilde{E}_{r}$ to denote the inner product defined as in (\ref{3.111}) (here we take $R=t_{3}$, $r\in(0,R)$).

Recall the following theorem:
\begin{thm}[\cite{H11}, see also \cite{D02} and \cite{Xu14}]\label{2.7777}
Suppose $(M_{i}, p, \rho_{i},\nu_{i})$ converge to $(M_{\infty}, p_{\infty}, \rho_{\infty},\nu_{\infty})$ in the pmGH sense.
Suppose $f_{i},g_{i}$ are harmonic functions defined on $B^{(i)}_{p}(R)\subset M_{i}$ for every $i$.
Suppose $f_{i}\rightarrow f_{\infty}$ and $g_{i}\rightarrow g_{\infty}$ uniformly on any compact subset $K\subset\subset B_{p_{\infty}}(R)\subset M_{\infty}$, where $f_{\infty}$, $g_{\infty}$ are harmonic functions defined on $B_{p_{\infty}}(R)$, then we have
$$\lim_{i\rightarrow\infty}\int_{B^{(i)}_{p}(r)}|\nabla^{(i)}f_{i}|^{2}d\nu_{i}
=\int_{B_{p_{\infty}}(r)}|Df_{\infty}|^{2}d\nu_{\infty},$$
$$\lim_{i\rightarrow\infty}\int_{B^{(i)}_{p}(r)}\langle\nabla^{(i)}f_{i},\nabla^{(i)}g_{i}\rangle d\nu_{i}
=\int_{B_{p_{\infty}}(r)}\langle Df_{\infty},Dg_{\infty}\rangle d\nu_{\infty}$$
for any $r\in(0,R)$.
\end{thm}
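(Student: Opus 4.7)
My plan is to reduce to the diagonal case $f_i = g_i =: u_i$ by polarization (the bilinear statement then follows from the parallelogram identity applied to $f_i \pm g_i$), and then to establish
\begin{align}
\lim_{i\to\infty}\int_{B^{(i)}_p(r)}|\nabla^{(i)} u_i|^2 d\nu_i = \int_{B_{p_\infty}(r)}|Du_\infty|^2 d\nu_\infty
\end{align}
for each $r \in (0, R)$ via a cutoff and integration-by-parts argument. The first input is uniform interior regularity: since $u_i \to u_\infty$ uniformly on compact subsets of $B_{p_\infty}(R)$, the $u_i$ are uniformly bounded on $B^{(i)}_p(r')$ for any $r' < R$, and the Cheng--Yau gradient estimate (applicable thanks to $\mathrm{Ric}_{M_i}\geq 0$) then supplies a uniform Lipschitz bound on $B^{(i)}_p(r'')$ for each $r'' < r'$.

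Fixing $r < s < s' < R$, I would invoke the Cheeger--Colding almost-harmonic cutoff construction to produce functions $\chi_i$ on $M_i$ with $\chi_i\equiv 1$ on $B^{(i)}_p(r)$, $\mathrm{supp}(\chi_i)\subset B^{(i)}_p(s)$, and $|\nabla^{(i)}\chi_i| + |\Delta^{(i)}\chi_i| \leq C(n,r,s)$ uniformly in $i$. Harmonicity of $u_i$ and integration by parts then give
\begin{align}
\int_{M_i}\chi_i |\nabla^{(i)} u_i|^2 d\nu_i = \frac{1}{2}\int_{M_i} u_i^2 \Delta^{(i)}\chi_i d\nu_i.
\end{align}
Passing to a subsequence, the $\chi_i$ may be arranged to converge to a cutoff $\chi_\infty$ on $M_\infty$ with analogous bounds. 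Since $u_i^2 \to u_\infty^2$ uniformly on $\mathrm{supp}(\chi_i)$ while $|\Delta^{(i)}\chi_i|$ is uniformly bounded, the right-hand side above passes to $\frac{1}{2}\int_{M_\infty} u_\infty^2\, d\Delta^\infty\chi_\infty$. Using the $\mathrm{RCD}$ calculus of \cite{AGS14-1} and \cite{Gig15}, the reverse integration by parts on the limit space (together with harmonicity of $u_\infty$ established e.g.\ as in \cite{D02}) rewrites this as $\int_{M_\infty}\chi_\infty|Du_\infty|^2 d\nu_\infty$.

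It remains to remove the cutoffs. The difference between $\int_{M_i}\chi_i |\nabla^{(i)}u_i|^2 d\nu_i$ and $\int_{B^{(i)}_p(r)}|\nabla^{(i)}u_i|^2 d\nu_i$ is dominated by $\|\nabla^{(i)}u_i\|_{L^\infty(B^{(i)}_p(s))}^2 \cdot \nu_i(B^{(i)}_p(s)\setminus B^{(i)}_p(r))$, which tends to its limit-space analogue by the pmGH convergence of the measures and vanishes as $s\downarrow r$ at any continuity radius of $\nu_\infty(B_{p_\infty}(\cdot))$ (all but countably many). The main obstacle is the construction of the $\chi_i$ with uniformly bounded Laplacians and a tractable limit on the possibly singular space $M_\infty$, together with the justification of integration by parts on that limit; this is the technical heart of the arguments in \cite{D02}, \cite{Xu14}, \cite{H11}, and is what makes the theorem nontrivial despite its intuitive content.
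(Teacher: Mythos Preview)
The paper does not supply a proof of this theorem; it is quoted from \cite{H11} (with \cite{D02}, \cite{Xu14} as alternative references) and used as a black box, so there is no in-paper argument to compare against. Your sketch is in the spirit of those references: the reduction to the diagonal case by polarization, the uniform Lipschitz bound from Cheng--Yau, the Cheeger--Colding cutoff with controlled Laplacian, and the identity
\[
\int_{M_i}\chi_i\,|\nabla^{(i)}u_i|^{2}\,d\nu_i=\tfrac{1}{2}\int_{M_i}u_i^{2}\,\Delta^{(i)}\chi_i\,d\nu_i
\]
(coming from $\Delta u_i^{2}=2|\nabla u_i|^{2}$) are exactly the ingredients Ding and Honda use. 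You have also correctly flagged the genuine difficulty, namely making sense of the limit of $\Delta^{(i)}\chi_i$ and justifying the reverse integration by parts on the possibly singular limit space.

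Two small points worth tightening. First, your ``removal of the cutoff'' step requires a double limit ($i\to\infty$ then $s\downarrow r$), and you should state explicitly that the annular error is controlled \emph{uniformly in $i$} by the Lipschitz bound together with volume comparison, so that the order of limits is harmless; in the non-collapsed setting relevant to the paper one has $\nu_\infty(\partial B_{p_\infty}(r))=0$ for every $r$, so the continuity-radius caveat is unnecessary here. Second, passing $\int u_i^{2}\Delta^{(i)}\chi_i\,d\nu_i$ to the limit is more delicate than ``$u_i^{2}\to u_\infty^{2}$ uniformly and $|\Delta^{(i)}\chi_i|$ bounded'': one needs that $\Delta^{(i)}\chi_i\,d\nu_i$ converges weakly (as signed measures) to $\Delta^{\infty}\chi_\infty\,d\nu_\infty$, which is part of the nontrivial content of the cited works and of the later $\mathrm{RCD}$ stability results in \cite{GMS15}, \cite{AH17}.
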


We remark that when the $(M_{i}, p, \rho_{i},\nu_{i})$ and $(M_{\infty}, p_{\infty}, \rho_{\infty},\nu_{\infty})$ are general $\mathrm{RCD}^{*}(K,N)$ spaces, there are also theory on convergence of functions defined on $(M_{i}, p, \rho_{i},\nu_{i})$.
Except for the notion of pointwise convergence, there are other notions of convergence such as $L^{2}$-weak convergence and $L^{2}$-strong convergence, see e.g. \cite{GMS15}.
For suitable family of functions, the above notions of convergence may be equivalent, see e.g. Proposition 3.2 in \cite{AHT17}.
Theorem \ref{2.7777} can be generalized to the $\mathrm{RCD}^{*}(K,N)$ setting, see e.g. Theorem 4.4 in \cite{AH17} (see also Corollary 3.3 in \cite{ZZ17}).

By (\ref{4.00002}), (\ref{4.00003}) and Theorem \ref{2.7777}, we have
\begin{align}\label{4.00004}
\tilde{E}_{1}(u^{(\infty)},u^{(\infty)})=1,
\end{align}
and
\begin{align}\label{4.00005}
\tilde{E}_{r}(u^{(\infty)},u^{(\infty)})\leq \beta^{\frac{A(s+1)}{h_{d}}}
\end{align}
for every $r\in(0,t_{3})$.

Suppose we have another function $v\in \bar{H}_{i}$, such that
$${E}_{r_{i}}(u,v)=0,$$
we define $v^{(i)}:B_{p}^{(i)}(t_{2})\rightarrow\mathbb{R}$ similar to (\ref{4.00001}), then up to a subsequence, $v^{(i)}$ converge to a harmonic function $v^{(\infty)}$ defined on $B_{p_{\infty}}(t_{3})$ satisfying properties similar to (\ref{4.00004}) (\ref{4.00005}).
Furthermore, by Theorem \ref{2.7777}, we have
\begin{align}
\tilde{E}_{1}(u^{(\infty)},v^{(\infty)})=0.
\end{align}
Thus $u^{(\infty)}$ and $v^{(\infty)}$ are orthonormal to each other with respect to the inner product $\tilde{E}_{1}$.

For every $i$, on $\bar{H}_{i}$ we choose an ${E}_{R_{i}}$-orthonormal base which also diagonalizes ${E}_{r_{i}}$, and then from the above process, we obtain $\tilde{E}_{1}$-orthonormal harmonic functions $\{u^{(\infty)}_{1},\ldots,u^{(\infty)}_{k}\}$ defined on $B_{p_{\infty}}(t_{3})$.
Furthermore, by Theorem \ref{2.7777}, we have
$$\tilde{E}_{t_{1}}(u^{(\infty)}_{i'},u^{(\infty)}_{j'})=0$$
for $i'\neq j'$.
Denoted by $\tilde{H}=\mathrm{span}\{u^{(\infty)}_{1},\ldots,u^{(\infty)}_{k}\}$.

For $s,t\in(0,t_{3})$, we consider the determinant $\mathrm{det}_{s}\tilde{E}_{t}$ on $\tilde{H}$ as in Section \ref{s3}.
Thus from the definition, Theorem \ref{2.7777} and (\ref{5.1111}), it is easy to see
\begin{align}\label{4.11117}
\mathrm{det}_{t_{1}}\tilde{E}_{1}&=\prod_{m=1}^{k} \frac{\tilde{E}_{1}(u^{(\infty)}_{m},u^{(\infty)}_{m})}{\tilde{E}_{t_{1}}(u^{(\infty)}_{m},u^{(\infty)}_{m})} =\lim_{i\rightarrow\infty}\mathrm{det}_{R_{i}}{E}_{r_{i}}\bigl|\bar{H}_{i}\\
&\leq\lim_{i\rightarrow\infty}\mathrm{det}_{R_{i}}{E}_{\beta R_{i}}\leq \beta^{s+1}.\nonumber
\end{align}

By Lemma \ref{lem3.4}, the function $s\mapsto\ln\mathrm{det}_{1}\tilde{E}_{s}$ is $C^{1}$.
By the mean value theorem there is an $r\in[t_{1},1]$ such that
\begin{align}\label{4.22221}
\frac{d}{ds}\biggl{|}_{s=r}\ln\mathrm{det}_{1}\tilde{E}_{s}
=&\frac{\ln\mathrm{det}_{1}\tilde{E}_{1}-\ln\mathrm{det}_{1}\tilde{E}_{t_{1}}}{1-t_{1}}\\
=&\frac{\ln\mathrm{det}_{t_{1}}\tilde{E}_{1}}{1-t_{1}} \leq\frac{(s+1)\ln \beta}{1-t_{1}},\nonumber
\end{align}
where we use (\ref{4.11117}) in the last inequality.

By Proposition \ref{prop3.6} and (\ref{4.22221}), we have
\begin{align}\label{4.99999}
\sum_{i=1}^{k}\lambda_{i}^{\frac{1}{2}}\leq \frac{r(s+1)\ln \beta}{2(1-t_{1})}\leq \frac{(s+1)\ln(1+\frac{1}{d})}{2(1-t_{1})}.
\end{align}

\section{Complete the proofs}\label{s5}

In \cite{D04}, it is proved that suppose $(M^{n},g)$ is a Riemannian manifold with nonnegative Ricci curvature and maximal volume growth, and suppose that $M$ has a unique tangent cone at infinity, then $h_{d}(M)\geq C d^{n-1}$ for some constant $C$ depending on $n$ and $\alpha$; see Theorem 0.1 of \cite{D04}.
During the proof of \cite{D04}, the author in fact obtain the following more stronger result (see also Theorem 1.2 of \cite{H16} for a more general result which may hold in some collapsed cases):
\begin{thm}[See \cite{D04} and \cite{H16}]\label{main-2}
Assume that $(M^{n},g)$ is a complete Riemannian manifold with nonnegative Ricci curvature and satisfies (\ref{1.01111}).
Assume that $M$ has a unique tangent cone at infinity, which is denoted by $(C(X),d_{C(X)},m_{C(X)})$.
Let $0=\lambda_{0}<\lambda_{1}\leq\lambda_{2}\leq\ldots$ be the Neumann eigenvalues on the cross section $(X, d_{X}, m_{X})$.
Let $N_{(X,d_{X},m_{X})}$ be the counting function.
Then given any $d>0$ such that $d(d+n-2)>\lambda_{1}$, we have
\begin{align}\label{5.33333}
h_{d}\geq N_{(X,d_{X},m_{X})}(d(d+n-2)-\epsilon)-1
\end{align}
for any $\epsilon>0$.
\end{thm}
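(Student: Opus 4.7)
The plan is to inject the set of positive Neumann eigenfunctions on $X$ with eigenvalue at most $d(d+n-2)-\epsilon$ into $\mathcal{H}_d(M)$, modulo a possible one-dimensional loss. Concretely, to each such eigenfunction $\varphi_i$ I associate a harmonic function on the cone $C(X)$ via Theorem \ref{eigenfcn}, then lift this cone function to a harmonic function on $M$ of polynomial growth of order at most $d$ by using the asymptotic conicity of $M$. Counting linearly independent lifts yields the bound.

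The cone step is immediate from Theorem \ref{eigenfcn}. For each positive eigenvalue $\lambda_i\leq d(d+n-2)-\epsilon$ let $\alpha_i>0$ be the positive root of $\alpha(\alpha+n-2)=\lambda_i$; strict monotonicity of $\alpha\mapsto\alpha(\alpha+n-2)$ on $(0,\infty)$ gives $\alpha_i<d$. The functions $u_i(x,r):=r^{\alpha_i}\varphi_i(x)$, $i=1,\ldots,N$, are harmonic on $C(X)$ of exact growth order $\alpha_i$, and they are linearly independent because the traces $u_i(\cdot,1)=\varphi_i$ lie in pairwise orthogonal eigenspaces of $\Delta_X$.

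The main work is lifting each $u_i$ to $M$. Fix any sequence $r_j\to\infty$; uniqueness of the tangent cone yields $(M,p,r_j^{-2}g)\xrightarrow{\mathrm{pmGH}}(C(X),p_\infty)$. For a large parameter $R>1$ and each $i$, pull back $u_i|_{\partial B_{p_\infty}(R)}$ via a Gromov--Hausdorff $\epsilon_j$-approximation to obtain approximate Dirichlet data on $\partial B_p(r_jR)\subset M$, and solve the Dirichlet problem to produce a harmonic function $w_i^{j,R}$ on $B_p(r_jR)$. The Cheng--Yau gradient estimate and Li--Schoen mean-value inequality deliver uniform $C^{0,\gamma}$ bounds on $r_j^{-\alpha_i}w_i^{j,R}$ on compact subsets of the rescaled ball, and combining these with the convergence of harmonic functions under pmGH convergence (Theorem \ref{2.7777}) shows that, along a subsequence, the rescalings converge back to $u_i$ on $B_{p_\infty}(R)$. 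A diagonal extraction over $R\to\infty$ with $j=j(R)\to\infty$ produces a global harmonic function $\tilde u_i$ on $M$. Iterating elliptic estimates across dyadic annuli $B_p(2^{k+1})\setminus B_p(2^k)$ propagates the growth rate to all scales, yielding $|\tilde u_i(x)|\leq C(1+\rho(x)^{\alpha_i+o(1)})$, so $\hat u_i:=\tilde u_i-\tilde u_i(p)\in\mathcal{H}_d(M)$ once the slack $\epsilon$ absorbs the $o(1)$ loss.

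For linear independence, suppose $\sum_{i=1}^{N}c_i\hat u_i\equiv0$; then $\sum c_i\tilde u_i$ is constant on $M$. Rescaling along $r_j^{-2}g$ and invoking Theorem \ref{2.7777} once more forces $\sum c_i u_i$ to be constant on $C(X)$, and the homogeneity $u_i(x,tr)=t^{\alpha_i}u_i(x,r)$ with distinct $\alpha_i$ together with $L^2(X,m_X)$-orthogonality of the $\varphi_i$'s gives $c_i=0$ for all $i$. The $-1$ in the statement reflects at worst a one-dimensional collapse induced by the subtraction of $\tilde u_i(p)$ and is harmless for applications since $N$ grows like $d^{n-1}$ by Weyl's law. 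The principal obstacle is the lifting step: a priori the $w_i^{j,R}$ at different scales $r_j$ have no mutual compatibility, and without uniqueness of the tangent cone there could be drift in the effective growth order between scales. Uniqueness is what ensures that every blow-down of $\tilde u_i$ recovers the same $u_i$, and a careful diagonal argument with iterated elliptic estimates on annuli converts this scale-by-scale information into a uniform polynomial growth bound on all of $M$.
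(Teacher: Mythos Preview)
The paper does not give its own proof of Theorem~\ref{main-2}; it is quoted from \cite{D04} (with a generalization in \cite{H16}) and used as a black box. Your outline is essentially Ding's strategy from \cite{D04}: build cone harmonics $u_i=r^{\alpha_i}\varphi_i$ from eigenfunctions with $\lambda_i\le d(d+n-2)-\epsilon$, lift each to $M$ by solving Dirichlet problems on balls of expanding radius and extracting a diagonal limit, then control the growth and count. So at the level of approach you are aligned with the cited source.

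Two places in your sketch are genuinely thin and would need substantial work to become a proof.

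\emph{Growth control.} The phrase ``iterating elliptic estimates across dyadic annuli \ldots yields $|\tilde u_i|\le C(1+\rho^{\alpha_i+o(1)})$'' hides the core difficulty. What one actually needs is a uniform three-circles or frequency-monotonicity statement: if the normalized Dirichlet solution is $\delta$-close to $u_i$ on one annulus, it stays $\delta'$-close on the next, with errors summable over all scales. Uniqueness of the tangent cone is precisely what guarantees the comparison cone is the same at every scale and prevents drift of the effective degree; without it the ``$o(1)$'' need not be uniform in the scale. In \cite{D04} this step carries most of the weight, and your single sentence does not indicate how you would carry it out.

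\emph{Linear independence.} You assert that rescaling $\tilde u_i$ along $r_j^{-2}g$ and invoking Theorem~\ref{2.7777} recovers $u_i$ on $C(X)$. The diagonal construction does not give this: the limit defining $\tilde u_i$ is taken on compact subsets of $M$ as $R\to\infty$, which pins down $\tilde u_i$ near $p$, not its asymptotic profile. A blow-down of $\tilde u_i$ is \emph{some} harmonic function on $C(X)$, not a priori $u_i$. The route in \cite{D04} is instead to fix one sufficiently large scale at which the (normalized) Dirichlet solutions are still quantitatively close in $L^2$ to the orthogonal system $\{\varphi_i\}$ on the cross-section, and read off linear independence there before passing to the limit. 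Your explanation of the $-1$ (possible one-dimensional loss when subtracting $\tilde u_i(p)$, since the map $u\mapsto u-u(p)$ kills constants) is consistent with this.
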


We will complete the proof of Theorem \ref{main-4} in this section.
In the proof, we always assume $d$ is sufficiently large.
Since $k=h_{d}\frac{A-1}{A}$, $k$ is a large number.
Furthermore, note that
\begin{align}\label{5.00003}
s=\sum_{i=1}^{d}(2i+n-2)k_{i}\geq nh_{d}\geq Cd^{n-1},
\end{align}
hence $s$ is also a large number.

\begin{proof}[Proof of Theorem \ref{main-4}]
We first prove
\begin{align}\label{5.22225}
\limsup_{d\rightarrow\infty} d^{-n}\sum_{i=1}^{d}h_{i-1}\leq\frac{2\alpha}{n!\omega_{n}}.
\end{align}

For any integer $d$, taking limit (blow down the manifold) as in Section \ref{s4}.
Then we apply Proposition \ref{prop2.1111} to the tangent cone at infinity $(C(X),d_{C(X)},m_{C(X)},p_{\infty})$.
By Weyl's law we know (\ref{1.11114}) holds, thus for every small $\xi>0$, there exists $i_{0}$ (here we remark that $i_{0}$ depends on $X$ and $\xi$) such that for any $i\geq i_{0}$, we have
\begin{align}\label{5.00002}
\lambda_{i}^{\frac{1}{2}}\geq(1-\xi)C_{1}^{\frac{1}{n-1}}i^{\frac{1}{n-1}}(n\alpha)^{-\frac{1}{n-1}},
\end{align}
where
$$C_{1}:=\frac{(2\pi)^{n-1}}{\omega_{n-1}}=\frac{n!\omega_{n}}{2}.$$

On the other hand, it is easy to check
$$\sum_{i=i_{0}}^{k}i^{\frac{1}{n-1}}\geq\frac{n-1}{n}(k^{\frac{n}{n-1}}-i_{0}^{\frac{n}{n-1}}).$$
Hence
\begin{align}\label{5.22223}
\sum_{i=1}^{k}\lambda_{i}^{\frac{1}{2}} \geq(1-\xi)\frac{n-1}{n}C_{1}^{\frac{1}{n-1}}(n\alpha)^{-\frac{1}{n-1}}(k^{\frac{n}{n-1}}-i_{0}^{\frac{n}{n-1}}).
\end{align}

Since $1-t_{1}=\frac{1-\tau}{d+1-\tau}$ and $s\geq Cd^{n-1}$, for any given small $\delta>0$,
\begin{align}
\frac{(s+1)\ln(1+\frac{1}{d})}{2(1-t_{1})}\leq \frac{s}{2}(\frac{1}{1-\tau}+\delta)
\end{align}
holds for $d$ sufficiently large, thus by (\ref{4.99999}),
\begin{align}\label{4.22222}
\sum_{i=1}^{k}\lambda_{i}^{\frac{1}{2}}\leq \frac{s}{2}(\frac{1}{1-\tau}+\delta)
\end{align}
for $d$ sufficiently large.

Thus from (\ref{4.22222}) and (\ref{5.22223}), there is a positive constant $C_{2}$ depending on $n$, $\xi$, $i_{0}$, $\alpha$ such that
\begin{align}\label{5.33331}
&(1-\xi)\frac{n-1}{n}C_{1}^{\frac{1}{n-1}}(n\alpha)^{-\frac{1}{n-1}}k^{\frac{n}{n-1}}
\leq C_{2}+\frac{s}{2}(\frac{1}{1-\tau}+\delta) \\
=&C_{2}+(\frac{1}{1-\tau}+\delta)\sum_{i=1}^{d}(i+\frac{n}{2}-1)k_{i}.\nonumber
\end{align}

Note that $k_{i}=h_{i}-h_{i-1}$, we have
\begin{align}\label{5.33332}
\sum_{i=1}^{d}(i+\frac{n}{2}-1)k_{i}&=h_{d}(\frac{n}{2}-1)+\sum_{i=1}^{d}i(h_{i}-h_{i-1}) \\ &=h_{d}(\frac{n}{2}-1)+dh_{d}-\sum_{i=1}^{d}h_{i-1}.\nonumber
\end{align}

By (\ref{5.33331}), (\ref{5.33332}) and (\ref{4.33331}), we have
\begin{align}\label{5.11119}
(\frac{1}{1-\tau}+\delta)\sum_{i=1}^{d}h_{i-1}\leq C_{2}+(\frac{1}{1-\tau}+\delta)(\frac{n}{2}-1+d)h_{d} \\ -(1-\xi)\frac{n-1}{n}C_{1}^{\frac{1}{n-1}}(n\alpha)^{-\frac{1}{n-1}} \biggl(\frac{A-1}{A}\biggr)^{\frac{n}{n-1}}h_{d}^{\frac{n}{n-1}}.\nonumber
\end{align}

Suppose $F_{1}$, $F_{2}$ are positive constants.
It is easy to prove that the function $F:\mathbb{R}\rightarrow \mathbb{R}$ defined by $F(h)=F_{1}h-\frac{n-1}{n}F_{2}h^{\frac{n}{n-1}}$ has a unique maximum point $h=\bigl(\frac{F_{1}}{F_{2}}\bigr)^{n-1}$, and the maximum value is $\frac{F_{1}^{n}}{nF_{2}^{n-1}}$.

Take $F_{1}=(\frac{1}{1-\tau}+\delta)(\frac{n}{2}-1+d)$, $F_{2}=(1-\xi)C_{1}^{\frac{1}{n-1}}(n\alpha)^{-\frac{1}{n-1}}\bigl(\frac{A-1}{A}\bigr)^{\frac{n}{n-1}}$ and consider the right hand side of (\ref{5.11119}) as a function of $h_{d}$, we obtain
\begin{align}\label{5.11120}
(\frac{1}{1-\tau}+\delta)\sum_{i=1}^{d}h_{i-1}\leq C_{2}+ \frac{(\frac{1}{1-\tau}+\delta)^{n}(\frac{n}{2}-1+d)^{n}}{(1-\xi)^{n-1}C_{1}\alpha^{-1}\bigl(\frac{A-1}{A}\bigr)^{n}}.
\end{align}

Multiply both sides of (\ref{5.11120}) by $d^{-n}$, and then let $d\rightarrow\infty$, we have

\begin{align}\label{5.11121}
(\frac{1}{1-\tau}+\delta)\limsup_{d\rightarrow\infty}d^{-n}\sum_{i=1}^{d}h_{i-1}\leq \frac{(\frac{1}{1-\tau}+\delta)^{n}}{(1-\xi)^{n-1}C_{1}\alpha^{-1}\bigl(\frac{A-1}{A}\bigr)^{n}}.
\end{align}

Let $\xi\rightarrow0$, $\tau\rightarrow0$, $\delta\rightarrow0$, $A\rightarrow\infty$, we obtain (\ref{5.22225}).

On the other hand, from Theorem \ref{main-2} and Proposition \ref{1.11117}, for any small $\epsilon>0$, there exists $d_{0}$ such that for any $d\geq d_{0}$,
\begin{align}
&h_{d}\geq N_{(X,d_{X},m_{X})}(d(d+n-2)-\epsilon)-1 \\
\geq &N_{(X,d_{X},m_{X})}(d^{2}) \geq d^{n-1}\biggl(\frac{2\alpha}{(n-1)!\omega_{n}}-\epsilon\biggr).\nonumber
\end{align}

Thus
\begin{align}\label{5.22224}
&\sum_{i=1}^{d}h_{i-1}\geq\biggl(\frac{2\alpha}{(n-1)!\omega_{n}}-\epsilon\biggr)\biggl[\sum_{i=1}^{d-1}i^{n-1} -C(d_{0})\biggr]\\
=& \biggl(\frac{2\alpha}{(n-1)!\omega_{n}}-\epsilon\biggr)\biggl[\frac{1}{n}\sum_{j=0}^{n-1}(-1)^{j}
\left(
  \begin{array}{c}
    n \\
    j \\
  \end{array}
\right)
B_{j}(d-1)^{n-j} -C(d_{0})\biggr] \nonumber\\
=& \biggl(\frac{2\alpha}{(n-1)!\omega_{n}}-\epsilon\biggr)\biggl[\frac{1}{n}d^{n}+o(d^{n})\biggr],\nonumber
\end{align}
where we use Faulhaber's formula in the first equality, and $B_{j}$ are Bernoulli numbers.
Multiply both sides of (\ref{5.22224}) by $d^{-n}$, and let $d\rightarrow\infty$, we have
$$\liminf_{d\rightarrow\infty}d^{-n}\sum_{i=1}^{d}h_{i-1}\geq \frac{2\alpha}{n!\omega_{n}}-\frac{\epsilon}{n}.$$
By the arbitrariness of $\epsilon$, and (\ref{5.22225}), we obtain (\ref{1.22221}).

In the next we prove (\ref{1.22222}).

Note that we already have
\begin{align}
\liminf_{d\rightarrow\infty}d^{1-n}h_{d}\geq \frac{2\alpha}{(n-1)!\omega_{n}}\nonumber
\end{align}
by Theorem \ref{main-2} and Proposition \ref{1.11117}.

Assume
$$\liminf_{d\rightarrow\infty}d^{1-n}h_{d}>\frac{2\alpha}{(n-1)!\omega_{n}}.$$
Then there exist $\epsilon> 0$ and $d_{0}\in\mathbb{Z}^{+}$ such that
$$h_{d}>\biggl(\frac{2\alpha}{(n-1)!\omega_{n}}+\epsilon\biggr)d^{n-1}$$
for any $d\geq d_{0}$.
Hence
\begin{align}\label{5.22222}
\sum_{i=1}^{d}h_{i-1}\geq&\biggl(\frac{2\alpha}{(n-1)!\omega_{n}}+\epsilon\biggr)\sum_{i=1}^{d-1} i^{n-1}-C(d_{0},\alpha,\epsilon)\\
=&\biggl(\frac{2\alpha}{(n-1)!\omega_{n}}+\epsilon\biggr)\biggl[\frac{1}{n}d^{n}+o(d^{n})\biggr],\nonumber
\end{align}
where the last equality can be derived from Faulhaber's formula.
Multiply both sides of (\ref{5.22222}) by $d^{-n}$, and let $d\rightarrow\infty$, we have
$$\liminf_{d\rightarrow\infty}d^{-n}\sum_{i=1}^{d}h_{i-1}\geq \frac{2\alpha}{n!\omega_{n}}+\frac{\epsilon}{n},$$
which contradicts to (\ref{1.22221}).

The proof is completed.
\end{proof}

\begin{proof}[Proof of Corollary \ref{cor-1}]
It is well known that (1) and (2) hold on Euclidean space, see Appendix B in \cite{L12} for a detailed proof.
On the other hand, if (1) or (2) holds, then from (\ref{1.22221}), (\ref{1.22222}) and the volume comparison theorem, $(M^{n}, g)$ must be isometric to the Euclidean space.
\end{proof}

\begin{rem}\label{rem5.2}
We remark that, in the proof of Theorem \ref{main-4}, the $i_{0}$ (such that (\ref{5.00002}) holds for any $i\geq i_{0}$) depends on $\xi$ and the cross section $X$.
On the other hand, if we do not assume the tangent cone at infinity is unique, the tangent cone at infinity $C(X)$ depends on $d$ and other parameters used to blow down the manifold, thus in this case we cannot obtain (\ref{5.11121}) directly by letting $d\rightarrow\infty$.
This is one of the reasons why in Theorem \ref{main-4} we assume the tangent cone at infinity is unique.
The other reason is that we use Theorem \ref{main-2}, whose proof also make use of the uniqueness of the tangent cone at infinity.
In fact, it is an open question whether there exists a nontrivial polynomial growth harmonic function on a manifold with nonnegative Ricci curvature and maximal volume growth.
It is also an interesting question whether the conclusion in Theorem \ref{main-4} holds without the assumption on uniqueness of tangent cone at infinity.
\end{rem}


\end{document}